\documentclass[leqno,final]{siamltex}
  \setlength{\hoffset}{.7in}
  \pagestyle{myheadings}

  \usepackage{graphicx} \graphicspath{ {./images/} }
  \usepackage{amsmath,amstext,amssymb,bm}
  \usepackage{leftidx}
  \numberwithin{equation}{section}
  \usepackage{xcolor}
  \usepackage{soul}
  \usepackage{tikz}
  \usetikzlibrary{shapes,arrows}
  \usepackage{mathrsfs}
  %\sethlcolor{green}
  \usepackage{tikz-cd}
  
  \usepackage{hyperref}
  \usepackage{cleveref}
  
  \newtheorem{remark}{Remark}[section]

  \allowdisplaybreaks[4]
  
  \newcommand{\cd}{{\cdot}}  
  \newcommand{\p}{{\partial}}  
  \newcommand{\Del}{{\Delta}}
  \newcommand{\del}{{\delta}}
  \newcommand{\nab}{\nabla}
  \newcommand{\eps}{\epsilon}
  \newcommand{\gam}{\gamma}
  \newcommand{\Gam}{\Gamma}
  \newcommand{\be}{\beta}
  \newcommand{\al}{\alpha}
  \newcommand{\kap}{\kappa}
  
  \newcommand{\Os}{{\Omega^s}}
  \newcommand{\Om}{{\Omega^m}}
  \newcommand{\tP}{\widetilde{P}}
  \newcommand{\Pe}{P_e}
  \newcommand{\Da}{D_a}
  
  \newcommand{\R}{\mathbb{R}}

  \newcommand{\calA}{\mathcal{A}}
  \newcommand{\calB}{\mathcal{B}}
  \newcommand{\calRh}{\mathcal{R}_h}
  \newcommand{\calPh}{\mathcal{P}_h}
  \newcommand{\calO}{\mathcal{O}}
  \newcommand{\Linf}{L^\infty}
  \newcommand{\V}{\mathbb V}

  \renewcommand{\(}{\left(} 
  \renewcommand{\)}{\right)}
  \newcommand{\<}{\langle}
  \renewcommand{\>}{\rangle}
  \newcommand{\x}{\times}
  \newcommand{\weakto}{\rightharpoonup}
  \newcommand{\les}{\lesssim}
  \renewcommand{\nab}{\nabla}

   \graphicspath{ {./images/} }
  
  %%%%%%%%%%%%%%%%%%%%%%%%%%%%%%%%%%%%%
  %%%%%%%%%%%%%%%%%%%%%%%%%%%%%%%%%%%%%
  \begin{document}

  	\title{Mathematical and Numerical Analysis for PDE Systems Modeling Intravascular Drug Release from Arterial Stents and Transport in Arterial Tissue\thanks{This work was partially supported by NSF 2012414 and NSF DMS-2309626.} }
  	\markboth{X. FENG and T. JIANG}{MODELING INTRAVASCULAR DRUG RELEASE FROM ARTERIAL STENTS}
  	
  	%\author{Xiaobing Feng\dag
  		%\address{Department of Mathematics, The University of Tennessee, Knoxville, TN 37996, U.S.A. }
  		%\email{xfeng@math.utk.edu}
  		%\thanks{\dag Department of Mathematics, The University of Tennessee, Knoxville, TN 37996, U.S.A. (xfeng@math.utk.edu).
  			%	The work of this author was partially supported by the NSF grant: DMS-1620168.}
  		%
  		%\author{Mitchell Sutton\ddag}
  		%\address{Department of Mathematics, The University of Tennessee, Knoxville, TN 37996, U.S.A. }
  		%\email{msutto11@vols.utk.edu}
  		%\thanks{\ddag Department of Mathematics, The University of Tennessee, Knoxville, TN 37996, U.S.A. (msutto11@vols.utk.edu).
  			%	The work of this author was partially supported by the NSF grant: DMS-1620168.}
  		
  		\author{Xiaobing Feng\thanks{Department of Mathematics, The University of Tennessee, 
  				Knoxville, TN 37996. U.S.A. (xfeng@utk.edu).}
  			\and{Tingao Jiang}\thanks{Department of Mathematics, The University of Tennessee, 
  				Knoxville, TN 37996. U.S.A. (tjiang12@utk.edu).} }
  		
  		\date{}
  		
  		\maketitle
  		
  		\thispagestyle{empty}
  		
  		\begin{abstract}
  			%This paper delves into the PDE and numerical analysis of a modified one-dimensional intravascular stent model, initially proposed in \cite{McGinty13}. The unique weak solution existence for the modified model is demonstrated through the application of the Galerkin method, complemented by a compactness argument. We present formulations for both a semi-discrete finite element method and a fully discrete scheme utilizing Euler time-stepping for the PDE model. Rigorous proof of optimal-order error estimates in the energy norm is provided for both schemes. The paper includes numerical results, featuring comparisons between various decoupling strategies and time-stepping schemes. Lastly, we briefly discuss extensions of the model to the two-dimensional case, presenting PDE and numerical analysis results for these extensions.
  			%
  			This paper is concerned with the PDE and numerical analysis of a modified one-dimensional  intravascular stent model originally proposed in \cite{McGinty13}.  It is proved that the modified model has a unique weak solution using the Galerkin method combined with a compactness argument. A  semi-discrete finite element method and a fully discrete scheme using the Euler time-stepping are formulated for the PDE model. Optimal order error estimates in the energy norm are proved for both schemes. Numerical results are presented along with comparisons between different decoupling strategies and time-stepping schemes. Lastly, extensions of the model and its PDE and numerical analysis results to the two-dimensional case are also briefly discussed.
  		\end{abstract}
  		
  		\begin{keywords}
  			Intravascular stent, drug release, diffusion-advection-reaction equation, well-posedness, Galerkin and finite element method, error estimates,  pharmacokinetics.
  		\end{keywords}
  		
  		\begin{AMS}
  			%\subjclass[2010]{Primary
  				35K57,  	%Reaction-diffusion equations
  				35Q92,  	%PDEs in connection with biology, chemistry and other natural sciences
  				65M22  	%Numerical solution of discretized equations for initial value and initial-boundary value problems involving PDEs
  				65M60,		%Finite element, Rayleigh-Ritz and Galerkin methods for initial value and initial-boundary value problems involving PDEs
  				92B05,  	%General biology and biomathematics
  			\end{AMS}

\section{Introduction}\label{sec-1}

	Coronary artery disease (CAD) is a condition where plaque builds up inside the coronary arteries, which are the blood vessels that supply oxygen-rich blood to the heart muscle. As the plaque accumulates, it can narrow or block the arteries, reducing blood flow to the heart and causing chest pain or discomfort, shortness of breath, fatigue, and other symptoms. CAD can also lead to more serious conditions, such as heart attack or heart failure.
	Treatments for CAD,  including angioplasty, vary depending on the severity and extent of the disease. In some cases, a stent may be placed during angioplasty. There are two main types of stents: bare-metal stents and drug-eluting stents (DESs). Bare-metal stents are made of metal and are effective at keeping the artery open, but they can sometimes cause re-narrowing of the artery, called restenosis. DESs are coated with medication that helps prevent re-narrowing and improve long-term outcomes.
\begin{figure}[!htb]
    \centering
    \includegraphics[width=0.5\textwidth]{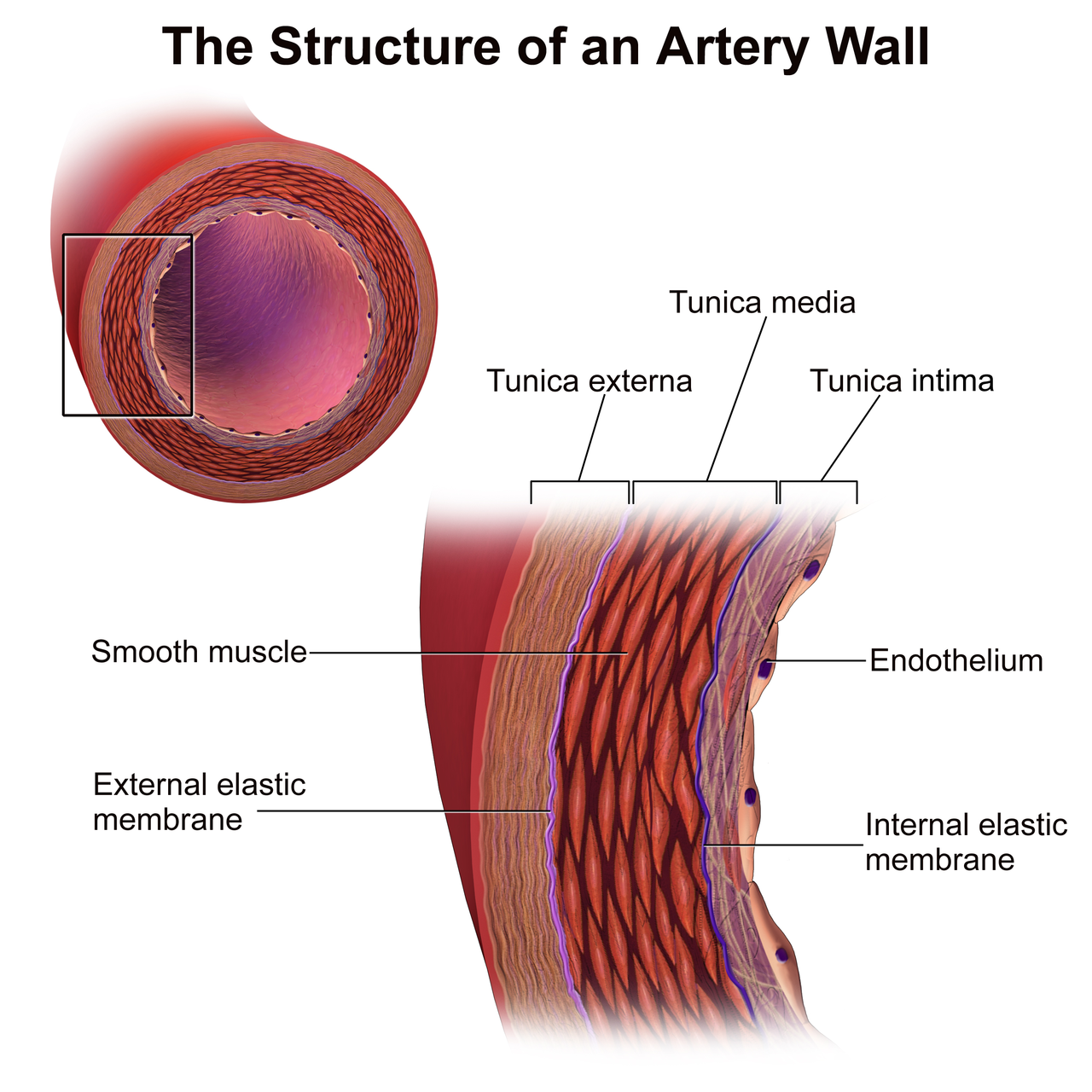}
         \caption[arterywall]{Sketch of arterial wall structure from \cite{blausen}.} %\footnotemark}
         \label{fig:ArteryWallStructure}
\end{figure}
	In order to model the drug delivery from the DES to and through the arterial walls, it is necessary to study the biological structures of the arteries. There are three layers that comprise the arterial wall (see \Cref{fig:ArteryWallStructure}), starting from the inside of the wall: intima, media, and adventitia. A thin layer of endothelial cells, called the endothelium, lines the inside of the intima. They are in contact with the blood and control the relaxation and contraction of the artery, as well as prevent the smooth muscle cells in the media from proliferating.
	The media is composed of smooth muscle cells, collagen, and elastic fibers that help to regulate blood pressure and flow. The smooth muscle cells are the targets for the drug delivery. 
	The adventitia is composed of connective tissue that supports the artery. It is filled with tiny blood vessels called vasa vasorum, which supplies blood to the adventitia and acts as a clearance mechanism for drugs released into the artery wall.

	Many multi-layer models have been proposed to study the pharmacokinetics in the arterial wall. Among the one-dimensional models, we first mention the model proposed in \cite{PontrelliDeMonte07}, which consists of a diffusion equation in the drug-coating region and a diffusion-advection-reaction equation in the arterial wall region. The coupling is achieved by applying interface conditions.  
	In \cite{McGinty11}, the authors further took the intracellular concentration into account, along with extending an early model to include the adventitia layer as well. In \cite{McGinty13}, the authors studied the 2-layer model from \cite{McGinty11} and provided an analytic solution in some special case. This two-layer model is the focus of this paper.
	
	 High-dimensional models have been studied as well. We refer the reader to  \cite{Balakrishnan, Borghi.et.al, ECPMM2020, Feenstra, Grassi.et.al, Horner, Hose.et.al, WSR, Zunino04} for more details. Here, we focus our attention on the model proposed in \cite{McGinty13} since it models the intracellular concentration separately.
	The reader is also referred to \cite{McGinty14} for a review of different models.

	The remainder of this paper is organized as follows. In Section \ref{sec-2}, we  introduce the one-dimensional model and state its weak formulation. In Section
	\ref{sec-3}, we present a complete PDE analysis for the model, which includes derivation of a priori energy estimates and the establishment of its well-posedness by using the Galerkin method with a compactness argument. In Section \ref{sec-4}, we present a complete finite-element numerical analysis for the PDE (partial differential equation) model, followed by the numerical results given in Section \ref{sec-5}. In Section \ref{sec-6},  we first introduce a generalized two-dimensional model and then sketch some  PDE and numerical analysis results for the proposed model. Finally, the paper is completed  with a few concluding remarks given in Section \ref{sec-7}. 
	%section is a brief summary with future directions. The eighth section is an appendix, gathering all parameters as well as statements cited throughout the work, and where necessary, their proofs.
	%
	%\footnotetext{Blausen.com staff (2014). ``Medical gallery of Blausen Medical 2014". WikiJournal of Medicine 1 (2). DOI:10.15347/wjm/2014.010. ISSN 2002-4436.}

%%%%%%%%%%%%%%%%%%%%%%%%%%%%%%%%%%%%%% 
\section{Mathematical model and its weak formulation}\label{sec-2}
     In this section, we first introduce the one-dimensional drug-release model from \cite{McGinty13} and then present its weak formulation.
	We note that several geometric simplifications were adopted when establishing  this 1-d model. First, the endothelium is usually severely damaged after the stent insertion; it is therefore omitted. Second, the intima, when devoid of the endothelium, has a structure that is similar to the media and will thus be absorbed into the media region in the model. Third, the adventitia is omitted in the model since research shows that it does not have a large effect on the drug concentration in the media region. See \Cref{1d} for a schematic diagram.
	
\begin{figure}\centering
\begin{tikzpicture}[scale=0.65]
\draw[fill=black!10,even odd rule] (0,0) circle (1.5 cm) circle (2 cm);
\node at(0,-0.5) {Lumen};
\node at(0,1.7) {Wall};
\draw [fill=black!30] (-0.3,1.3) rectangle (0.3,1.5);
\draw [fill=black!30] (-0.3,-1.3) rectangle (0.3,-1.5);
\draw [fill=black!30] (-1.5,-0.3) rectangle (-1.3,0.3);
\draw [fill=black!30] (1.5,-0.3) rectangle (1.3,0.3);
\node at (-3,1) {Stent};
\draw[->] (-2.5,1) to (0,1.4);
\draw[->] (-2.5,1) to (-1.4,0);
\draw [thick, red] (1.6,0) circle (0.6cm);
\draw [dotted, very thick, blue] (0,0) -- (1.3,0);
\draw [thick, blue] (1.3,0) -- (2,0);
\filldraw [black] (0,0) circle (1pt);
\filldraw [blue] (1.3,0) circle (1.5pt);
\filldraw [blue] (1.5,0) circle (1.5pt);
\filldraw [blue] (2,0) circle (1.5pt);
\draw[->,red,thick] (1.7,0.1) to[bend left] (4,0.2);
\draw [thick, blue] (4.5,0) -- (10,0);
\filldraw [blue] (4.5,0) circle (2pt);
\filldraw [blue] (5.5,0) circle (2pt);
\filldraw [blue] (10,0) circle (2pt);
\node at(4.5,-0.5) {$x=-l$};
\node at(5.5,0.5) {$x=0$};
\node at(10,-0.5) {$x=1$};
\end{tikzpicture}
\caption{1-d schematic diagram.}
\label{1d}
\end{figure}
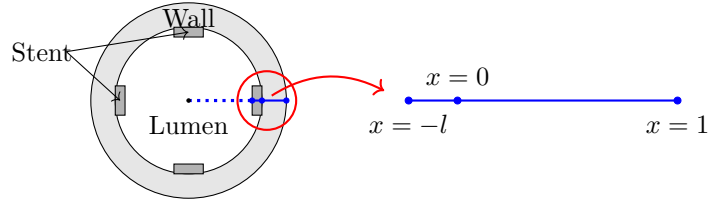

	Let $c$, $c_1$, and $c_2$ denote, respectively, the concentrations of the drug in the stent coating, in the extracellular matrix, and in the smooth muscle cells. The stent concentration is governed by the diffusion equation, the extracellular concentration by the diffusion-advection-reaction equation, and the intracellular concentration by a linear Ordinary Differential Equation (ODE). A no-flux boundary condition is imposed at the lumenal boundary ($x=-l$). The stent and wall concentrations are coupled through the continuity of mass flux, as well as the Kedem-Katchalsky equation at the interface ($x=0$). The system is then non-dimensionalized. We refer the reader to \cite{McGinty13} for a detailed explanation. 
	However, we note that the original model proposed in \cite{McGinty13} imposes a boundedness condition on the solution, whose main purpose is to help one to obtain an analytic solution, but this restriction may not be appropriate from the PDE point of view and, more importantly, it is difficult to approximate  numerically. Hence, we chose to replace this boundedness condition by imposing a no-flux boundary condition on the adventitial boundary ($x=1$)\textcolor{black}{, under the assumption that no drug escapes through the adventitial boundary. We note that this is an idealized situation. It is also common to impose the homogeneous Dirichlet boundary condition there, under the assumption that the drug concentration would be negligible at the far end of the arterial wall. Between the two idealized situations, we chose the former, with the understanding that the analysis of the system would not be affected aside from having slightly different solution spaces.}
	%The coupled system will be solved by partitioning the multi-layer domain. 
	
	Specifically, let $\Os:=(-l,0)$ and $\Om:=(0,1)$; our one-dimensional drug-release model is given as follows:  

	\begin{alignat}{3}
	   	\p_t c - \del c_{xx} &= 0, 	    &\qquad& x\in\Os,    		       &\,&t>0, \label{PDE}\\
	   	c_x &= 0, 			   	    &\qquad& x=-l,                    		&&t>0, \label{BC-l}\\
	   	c_x+  \tP c &=  \tP c_1,          &\qquad& x=0, 		     		&&t>0, \label{BC0} \\
	   	c &= 1, 				    &\qquad& x \in \overline{\Omega}^s,          &&t=0, \label{IC} \\[12pt]
	   	\phi \p_t c_1 - (c_1)_{xx} + \Pe(c_1)_x + \Da c_1 &= \frac{\Da}{K} c_2, 
	   						      &\qquad& x\in\Om, 		        &&t>0,  \label{PDEi}\\
	   	(c_1)_x - \Pe c_1 &= \del c_x, &\qquad& x=0,                  		&&t>0,  \label{BCi0}\\
	   	(c_1)_x &= 0, 			      &\quad& x=1,                    		&&t>0,  \label{BCi1}\\
	   	c_1 &= 0, 				      &\quad& x \in \overline{\Omega}^m,         &&t=0, \label{ICi} \\[12pt]
	   	(1-\phi) \p_t c_2 + \frac{\Da}{K} c_2 &= \Da c_1, 
	   						       &\quad& x\in\Om,   		       &&t>0, \label{ODEii}\\
	  	 c_2 &= 0, 			       &\quad& x \in \overline{\Omega}^m,       &&t=0,   \label{ICii}
	\end{alignat}
where $\partial_t$ denotes the partial derivative in time $t$ and the sub-index $x$ represents the partial derivative in the spatial variable $x$. \textcolor{black}{The parameters $\del, \tP, \Pe, \Da, K$ are positive real constants, while $\phi$ is a constant real number between 0 and 1.} Their specific values, as they appear in \cite{McGinty13}, are summarized in Appendix \ref{sec-8-1}.

Following the standard derivations, we can obtain the following weak formulation for the above coupled system.

	\begin{definition} \label{WeakSoln}
		$(c, c_1, c_2)$ is called a weak solution for the system given by \eqref{PDE}--\eqref{ICii}  if
		\begin{align*}
			c &\in \Linf(0,T;L^2(\Os)) \cap L^2(0,T;H^1(\Os)) \cap H^1(0,T;H^{-1}(\Os)),\\
			c_1 &\in \Linf(0,T;L^2(\Om)) \cap L^2(0,T;H^1(\Om)) \cap H^1(0,T;H^{-1}(\Om)), 
		%	c_2 &\in C^1(0,T;L^2(\Om)), 
		\end{align*}
		and $c_2 \in {\color{black} H^1(0,T;L^2(\Om)) }$ satisfy that $c(\cd,0) \equiv 1$ and $c_1(\cd,0) \equiv c_2(\cd,0) \equiv 0$, and, for some $T>0$, any $t\in (0,T]$ and any 
		$(v,w)\in H^1(\Os)\times H^1(\Om)$ such that
		\begin{alignat}{3} 
			\< \p_t c, v \>_{H^{-1}(\Os) \x H^1(\Os)} + \calA[c, v] &= \del \tP c_1(0,\cd) v(0,\cd), 
										&& % \forall \,v\in H^1(\Os),    &\,& t\in(0,T]
			\label{WeakSoln1} \\
			\phi \< \p_t c_{1}, w\>_{H^{-1}(\Om)\x H^1(\Om)} + \calB[c_1, w] &= \del \tP c(0,\cd) w(0,\cd) + \frac{\Da}{K} (c_2, w)_\Om, 
										&& %\forall \,w\in H^1(\Om),  &\,& t\in(0,T]
			\label{WeakSoln2}\\
			(1-\phi) \p_t c_{2} + \frac{\Da}{K} c_{2} &= \Da c_1, 
										&& %\quad & a.e.\,x\in\Om,       &\,& t\in(0,T]
			\label{WeakSoln3}  
		\end{alignat}
		where $(\cd,\cd)$ denotes the $L^2$-inner product on the respective domain, $\<\cd,\cd\>$ denotes the duality pairing, and
		\begin{align*}
			\calA[w, v] &:=  \del(w_x, v_x)_\Os + \del \tP  w(0,\cd)v(0,\cd),
			\\
			\calB[w, v] &:= \Pe(w_x, v)_\Om + \Da(w, v)_\Om + (w_x, v_x)_\Om + (\del \tP +\Pe)w(0,\cd)v(0,\cd)
		\end{align*}
		with  $\del, \tP, \Pe, \Da, K$ \textcolor{black}{being} positive constant parameters \textcolor{black}{and $\phi$ being a constant between 0 and 1}.
	\end{definition}

	For notation brevity, but without loss of clarity, throughout this paper, we may omit the explicit domain dependence in spatial norms. For example, $\|\cd\|_{L^2(\Os)}$ could be written as $\|\cd\|_{L^2}$.

%%%%%%%%%%%%%%%%%%%%%%%%%%%%%%%%%%%%%%      
\section{PDE analysis}\label{sec-3}
 
 The goals of this section are to prove the well-posedness for the coupled PDE system given by \eqref{PDE}--\eqref{ICii} and establish some properties for the weak solution, including the boundedness property. 
 To this end, we first derive some needed a priori estimates for weak solutions. Then, we prove the existence and uniqueness by using the Galerkin and energy methods. 

%%%
\subsection{A priori estimates} 
The main result of this subsection is summarized in the following theorem. 

	\begin{theorem} \label{Apriori}
		Let $(c,c_1,c_2)$ be a weak solution to the system given by \eqref{PDE}--\eqref{ICii} in the sense of Definition \ref{WeakSoln}. Then, the following holds:
		\begin{align} \label{Apriori1}
			&\|c\|^2_{\Linf(0,T;L^2(\Os))} + \|c_1\|^2_{\Linf(0,T;L^2(\Om))} + \|c_2\|^2_{\Linf(0,T;L^2(\Om))} \\
			&\qquad + \frac{2\del}{\gam}\|c_x\|^2_{L^2(0,T;L^2(\Os))} 
				+ \frac{2}{\gam} \|c_{1x}\|^2_{L^2(0,T;L^2(\Om))} \nonumber \\
			&\qquad\quad	+ \frac{\Pe}{\gam} \Big( \|c_1(0,\cd)\|^2_{L^2(0,T)} +  \|c_1(1,\cd)\|^2_{L^2(0,T)} \Big)  \leq 3l^2 e^{MT}, \nonumber \\
		 \label{Apriori2}
		&\|c(0,\cd)\|^2_{L^2(0,T)}  \leq  \frac{3\gam l^2 e^{MT}}{\Pe} + \frac{2l^2}{\del \tP }, \\
		 \label{Apriori3}
		&\|\p_t c\|_{L^2(0,T;H^{-1}(\Os))} + \|\p_t c_1\|_{L^2(0,T;H^{-1}(\Om))} +  \|\p_t c_2\|_{L^2(0,T;L^2(\Om))}  \\
				&\quad \leq C \Big( \|c\|_{L^2(0,T;H^1(\Os))} + \|c_1\|_{L^2(0,T;H^1(\Om))}+ \|c_2\|_{L^2(0,T;L^2(\Om))} \Big), \nonumber
		 \end{align}
		 where $C>0$ is a constant that is independent of $(c_1,c_2,c)$.
	\end{theorem}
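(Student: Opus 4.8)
The plan is to derive the three estimates in the standard energy-method order: first the $L^\infty(L^2)\cap L^2(H^1)$ bound \eqref{Apriori1}, then the boundary-trace bound \eqref{Apriori2}, and finally the bound \eqref{Apriori3} on the time derivatives, which follows almost mechanically from the first two. I would begin by testing \eqref{WeakSoln1} with $v=c$, \eqref{WeakSoln2} with $w=c_1$, and multiplying \eqref{WeakSoln3} by $c_2$, then integrating over $\Om$. This yields three identities whose principal parts are $\tfrac12\tfrac{d}{dt}\|c\|_{L^2(\Os)}^2 + \del\|c_x\|_{L^2(\Os)}^2 + \del\tP\,c(0,\cd)^2$, $\tfrac{\phi}{2}\tfrac{d}{dt}\|c_1\|_{L^2(\Om)}^2 + \|c_{1x}\|_{L^2(\Om)}^2 + \Da\|c_1\|_{L^2(\Om)}^2 + \text{(first-order and trace terms)}$, and $\tfrac{1-\phi}{2}\tfrac{d}{dt}\|c_2\|_{L^2(\Om)}^2 + \tfrac{\Da}{K}\|c_2\|_{L^2(\Om)}^2$. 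The first-order advection term $\Pe(c_{1x},c_1)_\Om = \tfrac{\Pe}{2}\int_0^1 (c_1^2)_x\,dx = \tfrac{\Pe}{2}\bigl(c_1(1,\cd)^2 - c_1(0,\cd)^2\bigr)$ integrates exactly to boundary traces; combined with the $(\del\tP+\Pe)c_1(0,\cd)^2$ term from $\calB$, this produces a nonnegative $\tfrac{\Pe}{2}\bigl(c_1(0,\cd)^2 + c_1(1,\cd)^2\bigr)$ on the left, explaining the third line of \eqref{Apriori1}.

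Next I would add the three identities. The coupling terms — $\del\tP\,c_1(0,\cd)c(0,\cd)$ appearing with opposite signs in \eqref{WeakSoln1} and \eqref{WeakSoln2}, and the $\tfrac{\Da}{K}(c_2,w)_\Om$ / $\Da(c_1,\cdot)$ terms linking \eqref{WeakSoln2} and \eqref{WeakSoln3} — must be handled so that the remainder is controlled by the $L^2$ norms of $c$, $c_1$, $c_2$. The antisymmetric trace coupling: after adding, $\del\tP\,c_1(0,\cd)c(0,\cd)$ appears once with $+$ and once with $-$ up to the factors, so with the $\del\tP\,c(0,\cd)^2$ and $(\del\tP+\Pe)c_1(0,\cd)^2$ already on the left, Young's inequality absorbs the cross term. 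For the reaction coupling I would use Young's inequality to bound $\tfrac{\Da}{K}(c_2,c_1)_\Om + \Da(c_1,c_2)_\Om$ by $\eps\|c_1\|_{L^2}^2 + C_\eps\|c_2\|_{L^2}^2$, with $\eps$ small enough to be absorbed by $\Da\|c_1\|_{L^2}^2$ (or simply by the $\tfrac{2}{\gam}\|c_{1x}\|_{L^2}^2$ term via Poincaré, depending on how $\gam$ is chosen). I expect $\gam$ to be a fixed constant depending on $\phi$, $1-\phi$, $\del$, $\tP$, $\Pe$ chosen to normalize the time-derivative coefficients; similarly $M$ depends on $\Da$, $K$, $\phi$ and the constants from Young's inequality. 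After this, one arrives at a differential inequality of the form $\tfrac{d}{dt}E(t) + (\text{good terms}) \le M\,E(t)$ with $E(t) = \|c\|_{L^2}^2 + \|c_1\|_{L^2}^2 + \|c_2\|_{L^2}^2$ (up to the $\gam$ normalization), and Grönwall's inequality together with $E(0) = \|1\|_{L^2(\Os)}^2 = l$ — or rather, after the $\gam$-scaling, the bound $3l^2$ on the initial data — closes \eqref{Apriori1}; taking the supremum over $t\in[0,T]$ handles the $L^\infty$ norms and integrating the good terms handles the $L^2(H^1)$ and trace norms.

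For \eqref{Apriori2} I would go back to the identity coming from \eqref{WeakSoln1} tested with $v=c$: integrating $\tfrac{d}{dt}\|c\|_{L^2(\Os)}^2 + 2\del\|c_x\|_{L^2}^2 + 2\del\tP\,c(0,\cd)^2 = 2\del\tP\,c_1(0,\cd)c(0,\cd)$ in time over $(0,T)$, dropping the nonnegative gradient term, using Young on the right to split off $\del\tP\,c(0,\cd)^2$ (absorbed left) plus $\del\tP\,c_1(0,\cd)^2$, and inserting the already-proven bounds on $\|c_1(0,\cd)\|_{L^2(0,T)}^2$ (from \eqref{Apriori1}) and on $\|c(\cd,T)\|_{L^2(\Os)}^2$ and $\|c(\cd,0)\|_{L^2(\Os)}^2 = l$ gives $2\del\tP\|c(0,\cd)\|_{L^2(0,T)}^2 \le l + \gam\,\cdot\,(\text{the }3l^2e^{MT}/\Pe\text{ term})$, i.e. the stated inequality. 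Finally, \eqref{Apriori3} is obtained by the usual duality argument: for $\varphi\in H^1(\Os)$ write $\langle\p_t c,\varphi\rangle = -\calA[c,\varphi] + \del\tP c_1(0,\cd)\varphi(0)$, bound each term by $\|\varphi\|_{H^1}$ times $\|c\|_{H^1}$ or a trace norm (controlled by $\|c_1\|_{H^1}$), take the sup over $\|\varphi\|_{H^1}\le1$, and then square and integrate in time; likewise for $\p_t c_1$ using $\calB$, and for $\p_t c_2$ directly from \eqref{WeakSoln3} since $\p_t c_2 = (1-\phi)^{-1}(\Da c_1 - \tfrac{\Da}{K}c_2)$ is already in $L^2(L^2)$.

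The main obstacle is bookkeeping: tracking the signs of the trace and coupling terms carefully enough that everything not manifestly nonnegative is absorbed, and choosing the scaling constant $\gam$ and the Grönwall constant $M$ explicitly so that the stated inequalities come out with exactly those coefficients. There is no deep difficulty — it is the standard energy estimate for a coupled parabolic/parabolic/ODE system — but the antisymmetric interface coupling and the exact appearance of $\tfrac{\Pe}{2}\bigl(c_1(0,\cd)^2+c_1(1,\cd)^2\bigr)$ require care.
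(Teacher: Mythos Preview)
Your proposal is correct and follows essentially the same energy-method route as the paper: test each equation with its own unknown, handle the advection term via $\Pe(c_{1x},c_1)_\Om=\tfrac{\Pe}{2}(c_1(1,\cd)^2-c_1(0,\cd)^2)$, absorb the cross terms by Young's inequality, apply Gr\"onwall, then return to the first equation for \eqref{Apriori2} and use boundedness of $\calA,\calB$ plus the 1-d trace inequality for \eqref{Apriori3}. One small slip: the two interface cross terms $\del\tP\,c_1(0,\cd)c(0,\cd)$ both appear with a $+$ sign (not opposite signs) on the right after testing, but since you absorb them via Young against the $\del\tP\,c(0,\cd)^2$ and $(\del\tP+\Pe)c_1(0,\cd)^2$ terms anyway this does not affect the argument; for the record, the paper takes $\gam=\tfrac12\min\{\phi,1-\phi\}$ and $M=\tfrac{1+\Da}{2\gam}$.
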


	\begin{proof} 
		Setting $v:=c$ and $w:=c_1$ in Definition \ref{WeakSoln}, by Theorem  \ref{evans_more_calculus}, H\"older's \textcolor{black}{inequality}, and the fundamental theorem of calculus, we get
		\begin{equation} \label{estimate1}
			\frac{1}{2}\frac{d}{dt} \|c\|^2_{L^2} + \del \|c_x\|^2_{L^2} + \del \tP c^2(0,\cd) 
				\leq \del \tP  \( \eps_1 c^2(0,\cd) + \frac{1}{4\eps_1} c_1^2(0,\cd) \)	\quad \forall \eps_1>0,
			\end{equation}
		\begin{multline} \label{estimate2}
			\frac{\phi}{2}\frac{d}{dt}\|c_1\|^2_{L^2} + \Da\|c_1\|^2_{L^2} + \|c_{1x}\|^2_{L^2} + \frac{\Pe}{2} c_1^2(1,\cd) +  \(\del \tP +\frac{\Pe}{2}\) c_1^2(0,\cd) \\
				\leq  \del \tP  \( \eps_2 c^2(0,\cd) + \frac{1}{4\eps_2} c_1^2(0,\cd) \) + \frac{\eps_3 \Da}{K} \|c_2\|^2_{L^2} +\frac{1}{4\eps_3} \|c_1\|^2_{L^2}								\quad \forall \eps_2,\eps_3>0,
		\end{multline}
		\begin{equation} \label{estimate3}
			\frac{(1-\phi)}{2} \frac{d}{dt} \|c_2\|^2_{L^2} + \frac{\Da}{K} \|c_2\|^2_{L^2} 
				\leq \Da \eps_4 \|c_1\|^2_{L^2} + \frac{\Da}{4\eps_4} \|c_2\|^2_{L^2}		\quad \forall \eps_4>0.
		\end{equation} 

		In order to cancel out boundary terms on the right-hand side, we decided to choose $\eps_1=\eps_2 = \frac{1}{2}$ and $\eps_3 = \eps_4 = 1$ and let $\gam:=\frac{1}{2} \min\{\phi,1-\phi\}$. Combining (\ref{estimate1}) to (\ref{estimate3}) together, applying Lemma \ref{gronwall} with 
		\begin{align*}
		x(t) &= \frac{2}{\gam} \Big( \del \|c_x\|^2_{L^2} + \|c_{1x}\|^2_{L^2} + \frac{\Pe}{2} \Big( c_1^2(1,\cd)+c_1^2(0,\cd) \Big),\\
		y(t) &= \|c\|^2_{L^2} + \|c_1\|^2_{L^2} + \|c_2\|^2_{L^2}, \\
		z(t) &\equiv 0, \qquad C(t) \equiv  \frac{1+\Da}{2\gam} =: M,
		\end{align*}
		and taking the supremum over $[0,T]$ on both sides yields 
		\begin{multline*}
			\|c\|^2_{\Linf(0,T;L^2)}   +  \|c_1\|^2_{\Linf(0,T;L^2)}   +   \|c_2\|^2_{\Linf(0,T;L^2)}   +   \frac{2\del}{\gam}\|c_x\|^2_{L^2(0,T;L^2)}  \\
				+  \frac{2}{\gam} \|c_{1x}\|^2_{L^2(0,T;L^2)}   +  \frac{\Pe}{\gam} \Big( \|c_1(0,\cd)\|^2_{L^2(0,T)}   +  \|c_1(1,\cd)\|^2_{L^2(0,T)} \Big)  \leq  3 l^2 e^{MT}, 
		\end{multline*}
	which proves \eqref{Apriori1}.
	
		Next, to recover the boundary term $\|c(0,\cd)\|^2_{L^2(0,T)}$, notice that the above estimate, in particular, implies that $\|c_1(0,\cd)\|^2_{L^2(0,T)}$ is controlled from the above. Therefore, setting $\eps_1=\frac{1}{2}$ and integrating (\ref{estimate1}) over $(0,T)$ yields
		\begin{equation*}
			\|c(0,\cd)\|^2_{L^2(0,T)}  
				\leq  \|c_1(0,\cd)\|^2_{L^2(0,T)} + 2l^2(\del \tP )^{-1}
				\leq  3\gam l^2 e^{MT} \Pe^{-1} + 2l^2(\del \tP )^{-1},
		\end{equation*}
	which gives \eqref{Apriori2}.

		We have yet to estimate the functional norms $\|\p_t c\|_{H^{-1}(\Os)}$ and $\|\p_t c_1\|_{H^{-1}(\Om)}$. It suffices to show that all terms in the bilinear forms are bounded. By Lemma \ref{trace_1d}, we immediately have the following estimate:
		\[  \del \tP w(0,\cd) v(0,\cd)  \leq C \|w\|_{H^1} \|v\|_{H^1} .  \]
		Notice that the constant $C$ here depends on the spaces to which the functions $w$ and $v$ belong. If both $w,v\in H^1(\Os)$, then $C = 2l^{-1}+1 =\calO(l^{-1})$. If both $w,v\in H^1(\Om)$, then $C = 3 =\calO(1)$. If $w\in H^1(\Os)$ and $v\in H^1(\Om)$, or vice versa, then $C = (6l^{-1}+3)^{1/2} =\calO(l^{-1/2})$, which is the only case that explicitly appears in Definition \ref{WeakSoln}. However, the first two cases appear implicitly within the bilinear forms $\calA[\cd,\cd]$ and $\calB[\cd,\cd]$.
Consequently, by H\"older's \textcolor{black}{inequality}, we get
		\begin{align*}
			\calA[w,v]  
				&\leq \del \|w_x\|_{L^2} \|v_x\|_{L^2}   +   \del \tP  (2l^{-1}+1) \|w\|_{H^1} \|v\|_{H^1} \\
				&\leq  \big( \del + \del \tP  (2l^{-1}+1) \big)  \|w\|_{H^1} \|v\|_{H^1},  \\
			\calB[w,v] 
				&\leq \|w_x\|_{L^2}\|v_x\|_{L^2}  +  \Da\|w\|_{L^2}\|v\|_{L^2} 
				+  \Pe\|w_x\|_{L^2}\|v\|_{L^2} 
				+ 3(\del \tP  + \Pe) \|w\|_{H^1}\|v\|_{H^1} \\
				&\leq (1+4\Pe+3\del \tP ) \|w\|_{H^1}\|v\|_{H^1}. 
		\end{align*}
		Therefore,
		\begin{align*}
			\|\p_t c\|_{L^2(0,T;H^{-1})} &+ \|\p_t c_1\|_{L^2(0,T;H^{-1})} +  \|\p_t c_2\|_{L^2(0,T;H^{-1})}  \\ 
				&\leq C \Big( \|c\|_{L^2(0,T;H^1)} + \|c_1\|_{L^2(0,T;H^1)}+ \|c_2\|_{L^2(0,T;L^2)} \Big),
		\end{align*}
		where $C = C \big( l^{-1},\del, \del \tP, \Pe, \phi^{-1}, (1-\phi)^{-1}, \Da, K^{-1} \big)$ with linear dependence on each argument.
		This then verifies \eqref{Apriori3} and hence concludes the proof. 
	\end{proof}

	\begin{remark}
		{\color{black}The boundedness now becomes a property of the weak solution via the compact embedding $H^1 ((x_1,x_2))\hookrightarrow L^\infty ((x_1,x_2))$ for any $x_1<x_2$. This validates our modified model and the newly imposed no-flux boundary condition. }
	\end{remark}

%%%
\subsection{Well-posedness}\label{sec-3-2}

	Since the system given by \eqref{PDE}--\eqref{ICii} is linear, uniqueness would be an immediate corollary of a priori estimates because, if there are two weak solutions, their  difference must satisfy the conditions of the same equations but take zero initial conditions, which, in turn, implies that the difference is zero. Hence, we have the following theorem.

	 \begin{theorem}[Uniqueness] \label{Uniqueness}
		There exists at most one weak solution $(c,c_1,c_2)$ to the problem given by \eqref{PDE}--\eqref{ICii} in the sense of Definition \ref{WeakSoln}.
	\end{theorem}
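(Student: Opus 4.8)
The plan is to reduce uniqueness to the energy estimate already established, exactly as the paragraph preceding the statement suggests. Suppose $(c,c_1,c_2)$ and $(\hat c,\hat c_1,\hat c_2)$ are two weak solutions in the sense of Definition \ref{WeakSoln}, and put $d:=c-\hat c$, $d_1:=c_1-\hat c_1$, $d_2:=c_2-\hat c_2$. Each of the three solution classes in Definition \ref{WeakSoln} is a linear space, so $(d,d_1,d_2)$ again lies in those classes; moreover $\calA[\cd,\cd]$ and $\calB[\cd,\cd]$ are bilinear and the coupling and source terms in \eqref{WeakSoln1}--\eqref{WeakSoln3} depend linearly on $(c,c_1,c_2)$, so $(d,d_1,d_2)$ satisfies \eqref{WeakSoln1}--\eqref{WeakSoln3} verbatim, now with zero right-hand-side data and with the homogeneous initial conditions $d(\cd,0)\equiv d_1(\cd,0)\equiv d_2(\cd,0)\equiv 0$.

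Next I would rerun the computation from the proof of Theorem \ref{Apriori} on the triple $(d,d_1,d_2)$: choosing $v:=d$ and $w:=d_1$ in \eqref{WeakSoln1}--\eqref{WeakSoln2} and using \eqref{WeakSoln3} with $d_2$ produces the exact analogues of \eqref{estimate1}--\eqref{estimate3}. The key observation is that no initial data entered those differential inequalities; it enters only upon integrating $\tfrac{d}{dt}\|\cd\|^2_{L^2}$ over $(0,t)$. Hence, with the same choices $\eps_1=\eps_2=\tfrac{1}{2}$, $\eps_3=\eps_4=1$, $\gam:=\tfrac{1}{2}\min\{\phi,1-\phi\}$ and the same definitions of $x(t)$, $z(t)\equiv 0$, and $C(t)\equiv M$ as in that proof, Lemma \ref{gronwall} applies with $y(t)=\|d\|^2_{L^2}+\|d_1\|^2_{L^2}+\|d_2\|^2_{L^2}$ and, crucially, $y(0)=0$. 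This yields the analogue of \eqref{Apriori1} with the right-hand side $3l^2 e^{MT}$ replaced by $0$; in particular $y(t)\le 0$ for all $t\in[0,T]$, whence $d\equiv d_1\equiv d_2\equiv 0$ and the two weak solutions agree.

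The argument is essentially bookkeeping, and the only points requiring a moment's care are (i) verifying that the difference of two weak solutions is still an admissible weak solution, which is immediate once one notes that the solution classes are vector spaces and the weak identities are linear, and (ii) ensuring that the combined energy inequality is genuinely of the Grönwall form $y'(t)\le C(t)y(t)+z(t)$ so that $y(0)=0$ forces $y\equiv 0$ rather than merely a finite bound. No new ideas beyond the proof of Theorem \ref{Apriori} are needed; this is precisely why the statement is positioned as a corollary-like theorem immediately after the a priori estimates.
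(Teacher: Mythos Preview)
Your proposal is correct and follows exactly the approach the paper takes: the paragraph immediately preceding Theorem \ref{Uniqueness} states precisely that, by linearity, the difference of two weak solutions satisfies the same system with zero initial data, and the a priori estimates of Theorem \ref{Apriori} then force the difference to vanish. Your write-up simply spells out the bookkeeping the paper leaves implicit.
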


To prove the existence, we adopt the Galerkin method with a compactness argument. To setup our Galerkin approximation, let {\color{black} $\mathcal T^s := \cup_{j=1}^{N} I^s_j$ and $\mathcal T^m := \cup_{j=1}^{N} I^m_j$}  be uniform meshes on $\Os$ and $\Om$ respectively. 
Let $\{\psi^s_j\}_{j=1}^{N+1}, \{\psi^m_j\}_{j=1}^{N+1}$ be the standard linear finite element nodal basis functions on $\mathcal T^s$ and $\mathcal T^m$, respectively, and define
	\begin{align*}
		&V^s_h := \text{span}\{\psi^s_j\}_{j=1}^{N+1} \subset H^1(\Os), \\
		&V^m_h := \text{span}\{\psi^m_j\}_{j=1}^{N+1} \subset H^1(\Om),
	\end{align*}
	where {\color{black} $h = l/N$ in $V^s_h$ and $h=1/N$ in $V^m_h$. Here, we abuse the notation to give $h$ multiple meanings  for the sake of notation brevity. }

	\begin{definition} \label{ApproxSoln}
		$(c_h,c_{1h}, c_{2h}): [0,T] \to V^s_h\times V^m_h\times V^m_h$ is called an \textit{approximate weak solution} to the system given by \eqref{PDE}--\eqref{ICii} 
		if the following holds for any $(v_h,w_h) \in V^s_h\times V^m_h$:
		\begin{align} 
			&\< \p_t c_h, v_h \>_{H^{-1}(\Os)\x H^1(\Os)} + \calA[c_h, v_h] = \del \tP c_{1h}(0,\cd) v_h(0,\cd),  \\
			&\phi \< \p_t c_{1h}, w_h \>_{H^{-1}(\Om)\x H^1(\Om)} + \calB[c_{1h},w_h] \\
			&\hskip 1.7in = \del \tP c_h(0,\cd) w_h(0,\cd) + \frac{\Da}{K} (c_{2h},w_h)_\Om, \nonumber \\
			&(1-\phi) \p_t c_{2h} + \frac{\Da}{K} c_{2h} = \Da c_{1h}, 
		\end{align}
		with the initial conditions $c_h(\cd,0) \equiv 1$ and $c_{1h}(\cd,0) \equiv c_{2h}(\cd,0) \equiv 0$.
	\end{definition}

	\begin{lemma} \label{ode_soln}
		For each $h>0$, there exists a unique approximate weak solution $(c_h,c_{1h},c_{2h})$ in the sense of Definition \ref{ApproxSoln}.
	\end{lemma}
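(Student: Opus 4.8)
The plan is to reduce Definition \ref{ApproxSoln} to an initial value problem for a linear system of ordinary differential equations in the nodal coefficient vectors, and then apply the classical existence--uniqueness theory for such systems.

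First, I would expand the unknowns in the finite element nodal bases, writing $c_h(\cd,t) = \sum_{j=1}^{N+1}\alpha_j(t)\psi^s_j$, $c_{1h}(\cd,t) = \sum_{j=1}^{N+1}\beta_j(t)\psi^m_j$, and $c_{2h}(\cd,t) = \sum_{j=1}^{N+1}\gamma_j(t)\psi^m_j$, and collecting the coefficients into vectors $\bm{\alpha},\bm{\beta},\bm{\gamma}\in\R^{N+1}$. Because each of these functions lies, for fixed $t$, in a finite-dimensional subspace of $H^1$, its time derivative lies in $L^2$, so the duality pairings $\<\p_t c_h, v_h\>$ and $\<\p_t c_{1h}, w_h\>$ in Definition \ref{ApproxSoln} reduce to the corresponding $L^2$-inner products. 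Testing the first two identities against $v_h = \psi^s_i$ and $w_h = \psi^m_i$ for $i = 1,\dots,N+1$, and using linear independence of $\{\psi^m_j\}$ to turn the pointwise identity $(1-\phi)\p_t c_{2h} + \tfrac{\Da}{K}c_{2h} = \Da c_{1h}$ into an equation for coefficients, yields a system of the schematic form
\begin{align*}
  M^s\,\bm{\alpha}' &= -A\,\bm{\alpha} + \del\tP\,R^{sm}\,\bm{\beta}, \\
  \phi\,M^m\,\bm{\beta}' &= -B\,\bm{\beta} + \del\tP\,R^{ms}\,\bm{\alpha} + \tfrac{\Da}{K}\,M^m\,\bm{\gamma}, \\
  (1-\phi)\,\bm{\gamma}' &= -\tfrac{\Da}{K}\,\bm{\gamma} + \Da\,\bm{\beta},
\end{align*}
where $M^s, M^m$ are the mass matrices on $V^s_h, V^m_h$; $A, B$ are the matrices representing $\calA[\cd,\cd]$ and $\calB[\cd,\cd]$ on $V^s_h, V^m_h$; and $R^{sm}, R^{ms}$ are the (rank-one) coupling matrices carrying the boundary values at $x=0$. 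Every entry above is a time-independent real number, being assembled from the fixed parameters $\del,\tP,\Pe,\Da,K,\phi$ and the fixed basis functions.

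Second, I would observe that $M^s$ and $M^m$ are Gram matrices of linearly independent functions, hence symmetric positive definite and in particular invertible, while $\phi>0$ and $1-\phi>0$ since $\phi\in(0,1)$. Multiplying the first line by $(M^s)^{-1}$, the second by $\phi^{-1}(M^m)^{-1}$, and the third by $(1-\phi)^{-1}$ puts the system in normal form $\mathbf{y}' = \mathcal L\,\mathbf{y}$ with $\mathbf{y} := (\bm{\alpha},\bm{\beta},\bm{\gamma})$ and a constant matrix $\mathcal L\in\R^{3(N+1)\times 3(N+1)}$, subject to the initial data $\bm{\alpha}(0) = (1,\dots,1)^\top$ (the nodal coefficients of the constant function $1$, which itself belongs to $V^s_h$) and $\bm{\beta}(0) = \bm{\gamma}(0) = \mathbf{0}$, as dictated by the initial conditions in Definition \ref{ApproxSoln}. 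By the Picard--Lindel\"of theorem for linear constant-coefficient systems, this problem has a unique solution $\mathbf{y}\in C^1([0,T];\R^{3(N+1)})$ (indeed for every $T>0$). Reconstructing $(c_h,c_{1h},c_{2h})$ from $\mathbf{y}$ produces the sought approximate weak solution, and since $\{\psi^s_i\}$ and $\{\psi^m_i\}$ span $V^s_h$ and $V^m_h$, satisfying the identities for all basis functions is equivalent to satisfying them for all $(v_h,w_h)\in V^s_h\times V^m_h$; uniqueness of $\mathbf{y}$ gives uniqueness of the triple.

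I do not expect a genuine obstacle here: the only points requiring care are (i) recording that the discrete duality pairings collapse to $L^2$-inner products so that the ODE system is exactly the one above, and (ii) using invertibility of $M^s, M^m$ together with $\phi\in(0,1)$ to put the system in normal form; the remainder is the routine Galerkin-to-ODE reduction together with standard linear ODE theory.
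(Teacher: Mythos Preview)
Your proposal is correct and follows essentially the same route as the paper: expand in the nodal bases, test against basis functions to obtain a linear constant-coefficient ODE system, invert the block-diagonal mass matrix (you use the Gram-matrix/SPD argument, the paper uses strict diagonal dominance of the tridiagonal mass matrices), and invoke the standard existence--uniqueness result for linear ODEs. No substantive difference.
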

 
	\begin{proof} 
	By the definition,  $c_h$, $c_{1h}$, $c_{2h}$ can be written as follows:
		\begin{equation*}
				  c_h(x,t) = \sum_{j=1}^{N+1} y_{0,j}(t) \psi^s_j(x), 
			\, c_{1h}(x,t) = \sum_{j=1}^{N+1} y_{1,j}(t) \psi^m_j(x), 
			\, c_{2h}(x,t) = \sum_{j=1}^{N+1} y_{2,j}(t) \psi^m_j(x). 
		\end{equation*}
Then, the equations in Definition \ref{ApproxSoln} can be rewritten as follows:
		\begin{align} 
			&\sum_{i=1}^{N+1} y'_{0,i}(t) (\psi^s_i,\psi^s_j)_\Os + y_{0,i}(t) \calA[\psi^s_i,\psi^s_j] 
				= \del \tP c_{1h}(0,\cd) \psi^s_j(0),   																\label{FEM} \\
			&\phi \sum_{i=1}^{N+1} y'_{1,i}(t) (\psi^m_i,\psi^m_j)_\Om + y_{1,i}(t)\calB[\psi^m_i,\psi^m_j] \label{FEMi} \\
			&\hskip 0.8in = \del \tP c_h(0,\cd) \psi^m_j(0) + \frac{\Da}{K} \sum_{i=1}^{N+1} y_{2,i}(t) (\psi^m_i,\psi^m_j)_\Om, 	\nonumber	 \\
			&(1-\phi) y'_{2,j}(t) \psi^m_j(x) + \frac{\Da}{K} y_{2,j}(t) \psi^m_j(x) = \Da  y_{1,j}(t) \psi^m_j(x)								\label{FEMii}
		 \end{align}
	 for each $j=1,\cdots,N+1$.
 
 		\Crefrange{FEM}{FEMii} can be rewritten as the following ODE system:
		\begin{equation} \label{odes}
			\bm D \,\bm y' (t) = \bm M \,\bm y(t), \qquad \bm y(0)=\begin{bmatrix}\vec{1}\\\vec{0}\\\vec{0}\end{bmatrix},
		\end{equation}
		where \begin{align*}
				  \bm y(t) &= \begin{bmatrix} \bm y_0(t) \\ \bm y_1(t) \\ \bm y_2(t) \end{bmatrix} , 
			\qquad  \bm D = \begin{bmatrix} 
							\Psi_s & & \\ 
							& \phi \Psi_m & \\ 
							& & (1-\phi)I 
						\end{bmatrix}, \\
					\\
			 \bm M &= 
			\left[ \begin{array}{c@{}c@{}c}
				-A 					& 	\left[\begin{array}{cc}
 		        									\vdots & \reflectbox{$\ddots$} \\
		         								\del \tP  & \cdots \\
 		 									\end{array}\right] 		& \mathbf{0} \\
 			 	\left[\begin{array}{cc}
		         		\cdots & \del \tP  \\
		         		\reflectbox{$\ddots$} & \vdots \\
		  		\end{array}\right] 		& -B 							& \frac{\Da}{K} \Psi_s \\
			\mathbf{0} 				& \Da I 						& -\frac{\Da}{K} I \\
			\end{array} \right], 
			\end{align*}
		and
		\[ 
		 [\Psi_s]_{ij} = (\psi^s_i,\psi^s_j)_\Os, \,\, [\Psi_m]_{ij} = (\psi^m_i,\psi^m_j)_\Om, \,\, 
			A_{ij} = \calA[\psi^s_i,\psi^s_j], \,\, B_{ij} = \calB[\psi^m_i,\psi^m_j]. 
		\]
		We note that, for $j=1,2,\cdots,N+1$, the following holds:
		\[ 
		y_{0,j}(t) = c_h(x_j,t), \quad
		 y_{1,j}(t) = c_{1h}(x_j,t), \quad
		y_{2,j}(t) = c_{2h}(x_j,t).
		\]
		Hence, the existence of a unique approximate weak solution is equivalent to 
		the existence of a unique solution to the above ODE system. Since $\Psi_s$ and $\Psi_m$ are tri-diagonal and strictly diagonally dominant, they are  invertible. Then, $ \bm{y}\,' (t) = \bm D^{-1} \bm M \,\bm{y}(t)$. Since $\bm D^{-1} \bm M$ is a constant matrix, by Lemma \ref{hartman}, the ODE system has a unique solution. Thus, there exists a unique approximate weak solution $(c_h,c_{1h},c_{2h})$.
	\end{proof} 

	\begin{theorem}[Existence] \label{Existence}
		There exists a weak solution $(c,c_1,c_2)$ to the problem given by \eqref{PDE}--\eqref{ICii} in the sense of Definition \ref{WeakSoln}.
	\end{theorem}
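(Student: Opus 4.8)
The plan is to complete the Galerkin scheme set up above in three steps: (i) transfer the a priori estimates of \Cref{Apriori} to the discrete level, uniformly in $h$; (ii) extract weakly/weakly-$*$ convergent subsequences of the approximate solutions; and (iii) pass to the limit in \Cref{ApproxSoln} to recover a weak solution in the sense of \Cref{WeakSoln}. Uniqueness is already \Cref{Uniqueness}, so only existence is at issue.

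First I would show that the approximate solutions $(c_h,c_{1h},c_{2h})$ of \Cref{ode_soln} satisfy \eqref{Apriori1}--\eqref{Apriori3} with constants independent of $h$. This is immediate: the choices $v_h:=c_h\in V^s_h$ and $w_h:=c_{1h}\in V^m_h$ are admissible in \Cref{ApproxSoln}, so the energy computation proving \Cref{Apriori} — Young's inequality with the same $\eps_i$, the same $\gam$, and Lemma \ref{gronwall} — goes through verbatim (for the $c_{2h}$-equation one pairs the nodal ODE with $c_{2h}$ in $L^2(\Om)$, i.e. tests against $\Psi_m\bm y_2$, using that the mass matrix $\Psi_m$ is symmetric positive definite). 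The bilinear-form bounds in the proof of \Cref{Apriori} then give the discrete $\p_t$-estimates. Consequently $\{c_h\}$ is bounded in $\Linf(0,T;L^2(\Os))\cap L^2(0,T;H^1(\Os))$ with $\{\p_t c_h\}$ bounded in $L^2(0,T;H^{-1}(\Os))$, likewise $\{c_{1h}\}$ over $\Om$, and $\{c_{2h}\}$ is bounded in $H^1(0,T;L^2(\Om))$.

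By reflexivity and the Banach--Alaoglu theorem I would extract a subsequence, not relabeled, with $c_h\weakto c$ weakly-$*$ in $\Linf(0,T;L^2(\Os))$ and weakly in $L^2(0,T;H^1(\Os))$, $\p_t c_h\weakto \p_t c$ weakly in $L^2(0,T;H^{-1}(\Os))$, and analogously $c_{1h}\weakto c_1$ and $c_{2h}\weakto c_2$ in the spaces above; by weak lower semicontinuity of norms the limits inherit precisely the regularity demanded in \Cref{WeakSoln}. (The Aubin--Lions--Simon lemma with $H^1\hookrightarrow\hookrightarrow L^2\hookrightarrow H^{-1}$ yields, in addition, strong convergence in $L^2(0,T;L^2)$, but this is not strictly needed since the system is linear.) To pass to the limit, fix $(v,w)\in H^1(\Os)\times H^1(\Om)$, choose $v_h\to v$ in $H^1(\Os)$ and $w_h\to w$ in $H^1(\Om)$ (nodal interpolants of smooth functions plus a density argument, or alternatively work along the subsequence of meshes refining a fixed one), fix $\eta\in C_c^\infty(0,T)$, multiply the equations of \Cref{ApproxSoln} by $\eta$, and integrate over $(0,T)$. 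Each term is then a pairing of a weakly convergent factor against a strongly convergent one: $\int_0^T\<\p_t c_h,v_h\>\eta\,dt\to\int_0^T\<\p_t c,v\>\eta\,dt$; $\int_0^T\calA[c_h,v_h]\eta\,dt\to\int_0^T\calA[c,v]\eta\,dt$ because $\calA$ is bounded bilinear on $H^1(\Os)\times H^1(\Os)$ (and $\calB$ similarly on $H^1(\Om)$); and the coupling term $\int_0^T\del\tP c_{1h}(0,\cd)v_h(0)\eta\,dt\to\int_0^T\del\tP c_1(0,\cd)v(0)\eta\,dt$ since point evaluation is a bounded functional on $H^1$ (\Cref{trace_1d}), so $c_{1h}(0,\cd)\weakto c_1(0,\cd)$ weakly in $L^2(0,T)$ while $v_h(0)\eta\to v(0)\eta$ in $L^2(0,T)$; the second equation is identical and the third passes by plain linearity. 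As $\eta$ and $(v,w)$ are arbitrary, $(c,c_1,c_2)$ satisfies \eqref{WeakSoln1}--\eqref{WeakSoln3}.

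For the initial conditions I would rerun the limit passage with $\eta\in C^1([0,T])$, $\eta(T)=0$, $\eta(0)=1$, integrating by parts in time first; since $c_h(\cd,0)$ is the interpolant of $1$ and converges to $1$ in $L^2(\Os)$ while $c_{1h}(\cd,0)=c_{2h}(\cd,0)=0$, comparing the limiting identity with the time-integrated-by-parts form of \eqref{WeakSoln1}--\eqref{WeakSoln3} — legitimate since $c,c_1\in H^1(0,T;H^{-1})$ and $c_2\in H^1(0,T;L^2)$ are, after modification on a null set, continuous in time into $L^2$ — gives $(c(\cd,0)-1,v)_{\Os}=0$ and $(c_1(\cd,0),w)_{\Om}=(c_2(\cd,0),w)_{\Om}=0$ for all $v,w$, hence $c(\cd,0)\equiv1$ and $c_1(\cd,0)\equiv c_2(\cd,0)\equiv0$, completing the proof. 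The two steps needing genuine care are the transfer of the a priori bounds to the Galerkin solutions — especially the discrete $\p_t$-estimates and the $c_{2h}$ bound obtained through the mass matrix — and the treatment of the boundary coupling terms in the limit; the latter is in fact mild in one space dimension, where point evaluation is $H^1$-bounded, so the "compactness argument" essentially reduces to Banach--Alaoglu together with weak lower semicontinuity.
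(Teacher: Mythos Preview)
Your proposal is correct and follows essentially the same Galerkin--compactness route as the paper: transfer the a priori bounds of \Cref{Apriori} to the discrete solutions of \Cref{ode_soln}, extract weak/weak-$*$ subsequential limits, and pass to the limit against smooth-in-time test functions using the boundedness of $\calA$, $\calB$, and the trace map. Your treatment is in fact slightly more careful than the paper's in two places --- you explicitly verify the initial conditions via the $\eta(0)=1$, $\eta(T)=0$ trick, and you flag the need for the discrete test functions to track the mesh sequence --- whereas the paper simply invokes density of $V_h^s$ at the end and omits the initial-data check altogether.
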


	\begin{proof} 
		We first notice that the approximate weak solution proved in Lemma \ref{ode_soln} satisfies the conditions of those estimates of Theorem \ref{Apriori}. Since $L^2(0,T;H^1(\Os))$ is a reflexive Banach space, $\Linf(0,T;L^2(\Os))$ is a separable normed linear space, and the sequence $\{c_h\}$ is uniformly bounded in $h$, then there exists a subsequence $\{c_{h_j}\}$ that converges weakly and weak* in $\Linf(0,T;L^2(\Os))$ and $\Linf(0,T;L^2)$, respectively (see Theorems \ref{friedman1} to \ref{friedman3}), that is, there exists $c\in L^2(0,T;H^1(\Os)) \cap \Linf(0,T;L^2(\Os))$ such that
		\begin{gather*}
			\int_0^T \< f, c_{h_j} \> dt \to \int_0^T \< f, c \> dt \qquad \forall\, f \in L^2(0,T;H^{-1}(\Os)), 
			\\
			\int_0^T \< c_{h_j}, g \> dt \to \int_0^T \< c, g \> dt \qquad \forall\, g \in \Linf(0,T;L^2(\Os)).
		\end{gather*}

		Moreover, since $H^1(0,T;H^{-1}(\Os))$ is a separable normed linear space and $\{c'_h\}$ is uniformly bounded in $h$, there exists a subsequence of  $\{c_{h_j}\}$ (not relabeling here for notation brevity) such that it converges in a weak* sense; namely, there exists $\zeta\in L^2(0,T;H^{-1}(\Os))$ such that 
			\[ 
			\int_0^T \< \p_t c_{h_j}, g \> dt \to \int_0^T \< \zeta, g \> dt \qquad \forall\, g\in L^2(0,T;H^{1}(\Os)), 
			\]
		We want to show that $\zeta$ is actually the weak time derivative of $c$. To this end, let $\eta \in C_0^\infty(0,T)$ and $v_h \in V^s_h$; then, 
			\[ 
			\int_0^T \< \p_t c_{h_j}, \eta v_h \> dt = - \int_0^T \< c_{h_j}, \eta' v_h \> dt \to - \int_0^T \< c, \eta' v_h \> dt \quad\mbox{as } j\to \infty.
			\]
		Therefore, $\zeta$ is the weak time-derivative of $c$ by definition. 
		Now, let $\eta \in C^\infty_0(0,T)$ and $v_h \in V^s_h$; by Definition \ref{ApproxSoln}, we have 
		\begin{equation*} 
			\< \p_t c_{h_j},\eta v_h \>_{H^{-1}(\Os)\x H^1(\Os)} + \calA[c_{h_j},\eta v_h] = \del \tP c_{1h_j}(0,\cd) \eta v_h(0,\cd) 
		\end{equation*}

		For each fixed $\eta$ and $v_h$, notice that $\,\calA[\cd,\eta v_h]$ defines a bounded linear functional on the space to which $\{c_{h_j}\}$ belongs. Therefore, setting $j\to \infty$, and by weak convergence, we get 
		\[ 
	\int_0^T \eta 				\calA[c_{h_j}, v_h] dt =\int_0^T \calA[c_{h_j}, \eta v_h] dt \to \int_0^T \calA[c, \eta v_h] dt = \int_0^T \eta  \calA[c, v_h]  dt. 
		\]

		Similarly,
		\[ 
		\del \tP \int_0^T \eta c_{h_j}(0,\cd) v_h(0,\cd) dt \to \del \tP \int_0^T \eta c(0,\cd) v_h(0,\cd) dt \quad\mbox{as } j\to \infty.
		\]

Since $\eta\in C^\infty_0(0,T)$ is arbitrary, then the above equations infer that
		\begin{equation*} 
			\< \p_t c, v_h \>_{(H^{-1}\x H^1)(\Os)} + \calA[c, v_h] = \del \tP c_1(0,\cd) v_h(0,\cd) \qquad \forall\, v_h\in V^s_h,
		\end{equation*}
which, with the denseness of $V^s_h$ in $H^1(\Os)$, implies that 
		\begin{equation*} 
			\< \p_t c, v \>_{(H^{-1}\x H^1)(\Os)} + \calA[c, v] = \del \tP c_1(0,\cd) v(0,\cd) \qquad \forall\, v \in H^1(\Os).
		\end{equation*}
	Hence, \eqref{WeakSoln1} holds. 

Using exactly the same argument we can show that $c_1$ satisfies \eqref{WeakSoln2}.

	It remains to be shown that $c_2$ satisfies \eqref{WeakSoln3}. To this end, let $\eta \in C_0^\infty (0,T)$ and $w_h\in V^m_h$; then, it follows that
			\[ 
			\int_0^T \< \eta w_h, c'_{2h}\> dt + \int_0^T \Bigl\< \frac{\Da}{K}c_{2h}, \eta w_h \Bigr\> dt = \int_0^T \< \eta w_h, \Da c_{1h}\> dt. 
			\]
	Using a previous derivation, we can show that
			\[ 
			\int_0^T \< \eta w_h, \Da c_{1h_j}\> dt \to \int_0^T \< \eta w_h, \Da c_1\> dt\quad \mbox{as } j\to \infty.
			\]

		In addition, since $\{c_{2h_j}\}$ is uniformly (in $h$) bounded in $\Linf(0,T;L^2)$, which is a separable normed linear space, and $\{c'_{2h_j}\}$ is uniformly bounded in $L^2(0,T;L^2)$, which is a reflexive Banach space, then there exists a subsequence of $\{c_{2h_j}\}$ (not relabeling for notation brevity) such that, as $j\to\infty$,
			\[ 
			c_{2h_j} \weakto c_2 \in \Linf(0,T;L^2(\Om)), \quad \p_t c_{2h_j} \weakto^* \theta \in L^2(0,T;L^2(\Om)). 
			\]
Again, with the help of integration by parts and the definition, we can show that $\theta = \p_t c_2$. Therefore, as $j\to \infty$,
		\begin{gather*}
			\int_0^T \Bigl\< \frac{\Da}{K} c_{2h_j}, \eta w_h \Bigr\> dt \to \int_0^T \Bigl\< \frac{\Da}{K} c_2, \eta w_h \Bigr\> dt, \\
			\int_0^T \<  \eta w_h, \p_t c_{2h_j}\> dt \to \int_0^T \< \eta w_h, \p_t c_2\> dt. 
		\end{gather*}
Consequently, 
			\[ 
			\int_0^T \<  \eta w_h, \p_t c_2\> dt + \int_0^T \Bigl\< \frac{\Da}{K}c_{2}, \eta w_h \Bigr\> dt = \int_0^T \< \eta w_h, \Da c_1\> dt. 
			\]
Since $\eta\in C_0^\infty(0,T)$ and $w_h \in V_h^m$ are arbitrary and $V^m_h$ is dense in $H^1(\Omega^m)$, then
			\[ 
			(1-\phi) \p_t c_2 + \frac{\Da}{K}c_{2} =\Da c_1 \qquad \mbox{in } L^2(0,T; L^2(\Omega^m)). 
			\]
			
Thus,  $(c,c_1,c_2)$ is a weak solution to the problem given by \eqref{PDE}--\eqref{ICii} by Definition \ref{WeakSoln}.  The proof is complete. 
	\end{proof}

		\begin{corollary}[Convergence] \label{Convergence}
			The finite-element approximate weak solution $(c_h,c_{1h},c_{2h})$ converges to the unique PDE solution $(c, c_1, c_2)$. 
		\end{corollary}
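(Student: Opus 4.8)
The plan is to deduce convergence of the entire Galerkin family from three facts already in hand: the $h$-uniform a priori bounds of Theorem~\ref{Apriori}, the limit-identification carried out inside the proof of Theorem~\ref{Existence}, and the uniqueness assertion of Theorem~\ref{Uniqueness}. The mechanism is the classical subsequence principle combined with a contradiction argument, which upgrades ``every subsequence admits a sub-subsequence converging to the unique weak solution'' into convergence of the whole family as $h\to0$. Concretely, the convergence asserted is in the weak and weak-$*$ topologies of the spaces of Definition~\ref{WeakSoln}: $c_h\weakto c$ in $L^2(0,T;H^1(\Os))$ and weak-$*$ in $\Linf(0,T;L^2(\Os))$, $\p_t c_h\weakto\p_t c$ in $L^2(0,T;H^{-1}(\Os))$, and likewise for $c_{1h}$ on $\Om$, with $c_{2h}$ converging weak-$*$ in $\Linf(0,T;L^2(\Om))$ and $\p_t c_{2h}\weakto\p_t c_2$ in $L^2(0,T;L^2(\Om))$.

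First I would recall that, by Lemma~\ref{ode_soln}, the approximate weak solution $(c_h,c_{1h},c_{2h})$ exists for every $h>0$ and, as observed at the start of the proof of Theorem~\ref{Existence}, satisfies \eqref{Apriori1}--\eqref{Apriori3}; hence $\{c_h\},\{c_{1h}\}$ are bounded uniformly in $h$ in $L^2(0,T;H^1)\cap\Linf(0,T;L^2)\cap H^1(0,T;H^{-1})$, and $\{c_{2h}\}$ is bounded uniformly in $h$ in $H^1(0,T;L^2)$. Next, fix an arbitrary $f\in L^2(0,T;H^{-1}(\Os))$ and suppose, for contradiction, that $\int_0^T\langle f,c_h\rangle\,dt$ does not converge to $\int_0^T\langle f,c\rangle\,dt$; then there exist $\eps>0$ and a subsequence $\{h_k\}$ with $\bigl|\int_0^T\langle f,c_{h_k}-c\rangle\,dt\bigr|\ge\eps$ for all $k$. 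Arguing exactly as in the proof of Theorem~\ref{Existence} — from $\{h_k\}$ extract a further subsequence along which $c_{h_k},c_{1h_k},c_{2h_k}$ and their time derivatives converge weakly (resp.\ weak-$*$) in the spaces of Definition~\ref{WeakSoln}, by the same compactness facts, and then pass to the limit in the equations of Definition~\ref{ApproxSoln} using the density of the finite element spaces $V^s_h,V^m_h$ in $H^1(\Os),H^1(\Om)$ as $h\to0$ — one finds that the resulting limit triple is a weak solution in the sense of Definition~\ref{WeakSoln}. By Theorem~\ref{Uniqueness} that limit must equal $(c,c_1,c_2)$; in particular $\int_0^T\langle f,c_{h_{k_j}}\rangle\,dt\to\int_0^T\langle f,c\rangle\,dt$ along the further subsequence, contradicting the choice of $\{h_k\}$. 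Since $f$ was arbitrary, $c_h\weakto c$, and the identical argument applied to the other components and to the time derivatives gives the remaining convergences.

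The one genuinely delicate point, and the main thing beyond what Theorems~\ref{Apriori}, \ref{Uniqueness}, \ref{Existence} already provide, is precisely this passage from subsequential to full-family convergence, since the relevant weak and weak-$*$ topologies are not metrizable on the whole space. I would handle it by the contradiction argument above, which tests against one functional at a time and so never leaves a metric setting; equivalently, one may restrict to the norm-bounded sets supplied by Theorem~\ref{Apriori}, on which the weak topology of the reflexive separable space $L^2(0,T;H^1)$ and the weak-$*$ topology of the dual of a separable space are metrizable, whence the ordinary subsequence principle applies verbatim. Everything else is bookkeeping inherited from the earlier results.

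If in addition a strong-convergence statement is desired, the $h$-uniform bounds in $L^2(0,T;H^1)\cap H^1(0,T;H^{-1})$ for $c_h,c_{1h}$ and in $H^1(0,T;L^2)$ for $c_{2h}$, together with the Aubin--Lions--Simon compactness lemma and the uniqueness of the limit, promote the convergence to strong convergence in $L^2(0,T;L^2)$ on the respective subdomains; this refinement, however, is not needed for the statement as written.
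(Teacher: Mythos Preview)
Your argument is correct and follows essentially the same route as the paper: combine the uniform a~priori bounds, the limit identification from the existence proof, and uniqueness via the subsequence principle to conclude convergence of the whole family. The paper's own proof is a two-sentence sketch of exactly this idea; your version simply makes explicit the contradiction argument (testing one functional at a time) and the metrizability issue that the paper leaves implicit.
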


\begin{proof}
From the proof of Theorem \ref{Existence}, we conclude that every convergent subsequence of the finite-element approximate weak solution $(c_h,c_{1h}, c_{2h})$ converges to a PDE weak solution $(c, c_1, c_2)$. Since the PDE weak solution is unique, the whole sequence $(c_h,c_{1h},c_{2h})$ must converge to the PDE solution $(c, c_1, c_2)$.
\end{proof}

%%%%%%%%%%%%%%%%%%%%%%%%%%%%%%%%%%%%%% 
\section{Error estimates and formulation of fully discrete scheme}\label{sec-4}
In the last section, we constructed a semi-discrete finite-element Galerkin 
approximation to the problem given by \eqref{PDE}--\eqref{ICii} and proved its convergence (see Corollary \ref{Convergence}) as a byproduct of the proof 
of the existence theorem. The primary goals of this section are to derive optimal rates of convergence in powers of $h$ (i.e., error estimates) for the finite-element solution, and to formulate a practical fully discrete scheme which will be used in the subsequent section for numerical simulations and to numerically verify the sharpness of the proved  convergence rates.

%%%
\subsection{Error estimates for semi-discrete finite element method}\label{sec-4-1}
We recall that PDE and finite-element approximate solutions were respectively defined in Definitions \ref{WeakSoln} and \ref{ApproxSoln}. To derive 
the error estimates, we first need to obtain the error equations; to this end, 
subtracting the equations in Definition \ref{ApproxSoln} from their corresponding equations in Definition \ref{WeakSoln} (with the same test functions $v_h \in V^s_h$ and $ w_h \in V^m_h$), we get
	\begin{align} 
		&\< \p_t e_h, v_h \>_{H^{-1}(\Os)\x H^1(\Os)} + \calA[e_h, v_h] 
			= \del \tP e_{1h} (0,\cd) v_h(0,\cd),  
		\label{Err} \\
		&\phi \< \p_t e_{1h}, w_h \>_{H^{-1}(\Om)\x H^1(\Om)} + \calB[e_{1h}, w_h] \label{Erri}\\
			&\hskip 0.8in = \del \tP e_h(0,\cd) w_h(0,\cd) + \frac{\Da}{K} (e_{2h}, w_h)_\Om,  \nonumber		 \\
		&(1-\phi)\p_t e_{2h} + \frac{\Da}{K} e_{2h}
			= \Da e_{1h}, \qquad a.e. \,x\in\Om,
		\label{Errii} 
	\end{align}
where $e_h := c-c_h, e_{1h} := c_1-c_{1h}$, and $e_{2h} := c_2-c_{2h}$.

Let $\calRh^A: H^1(\Os) \to V^s_h$ be the elliptic projection defined by
\[
\calA[ c - \calRh^A c, v_h]=0\qquad \forall v_h \in V^s_h,
\]
and $\calRh^B: H^1(\Os) \to V^m_h$ be another elliptic projection defined by
$$
\calB[ c_1 - \calRh^B c_1, w_h]=0\qquad \forall w_h\in V^m_h.
$$
These projection operators are well-defined because each bilinear form is coercive and continuous. Further, let $\calPh$ be the $L^2$-projection onto $V^m_h$ defined by
$$
(c_2 - \calPh c_2, w_h)=0\qquad \forall w_h\in V^m_h.
$$

We also introduce the following error decompositions:
	\begin{align*}
		e_h &= \underbrace{(c - \calRh^A c)}_{=:\rho_h} + \underbrace{(\calRh^A c - c_h)}_{=:\xi_h} ,
		\\
		e_{1h} &= \underbrace{(c_1 - \calRh^B c_1)}_{=:\rho_{1h}} + \underbrace{( \calRh^B c_1 - c_{1h})}_{=:\xi_{1h}} , 
		\\
		e_{2h} &= \underbrace{(c_2 - \calPh c_2)}_{=:\rho_{2h}} + \underbrace{( \calPh c_2 - c_{2h})}_{=:\xi_{2h}} .
	\end{align*}
	It is well known (see \cite{BrennerScott}) that 
\begin{align}
	\|\rho_h\|_{L^2} + h \| (\rho_h)_x\|_{L^2} &\lesssim h^2 \|c\|_{H^2}, 
	\label{elliptic_estimate1}\\
	\|\rho_{1h}\|_{L^2} + h \|(\rho_{1h})_x\|_{L^2}  &\lesssim h^2 \|c_1\|_{H^2}, 
	\label{elliptic_estimate2}\\
	 \|\rho_{2h}\|_{L^2}  &\lesssim h^2 \|c_2\|_{H^2}. 
	 \label{elliptic_estimate3}
\end{align}
Using the error decompositions we can rewrite the error equations \eqref{Err}--\eqref{Errii} as follows:
	\begin{align} 
	&\< \p_t \xi_h, v_h \>_{H^{-1}(\Os)\x H^1(\Os)} + \calA[\xi_h, v_h] 
	- \del \tP \xi_{1h} (0,\cd) v_h(0,\cd)\label{Err_2}  \\
	&\qquad = -\< \p_t \rho_h, v_h \>_{H^{-1}(\Os)\x H^1(\Os)} - \calA[\rho_h, v_h] 	+ \del \tP \rho_{1h} (0,\cd) v_h(0,\cd), \nonumber
	\\
	&\phi \< \p_t \xi_{1h}, w_h \>_{H^{-1}(\Om)\x H^1(\Om)} + \calB[\xi_{1h}, w_h]-\del \tP \xi_h(0,\cd) w_h(0,\cd) \label{Erri_2}\\
	&\qquad  - \frac{\Da}{K} (\xi_{2h}, w_h)_\Om 
	= -\phi \< \p_t \rho_{1h}, w_h \>_{H^{-1}(\Om)\x H^1(\Om)} - \calB[\rho_{1h}, w_h] \nonumber \\
	&\hskip 1.6in +\del \tP \rho_h(0,\cd) w_h(0,\cd) 
	+ \frac{\Da}{K} (\rho_{2h}, w_h)_\Om, \nonumber		 \\
	&(1-\phi)\p_t \xi_{2h} + \frac{\Da}{K} \xi_{2h}
	- \Da \xi_{1h} =(\phi-1)\p_t \rho_{2h} - \frac{\Da}{K} \rho_{2h}
	+ \Da \rho_{1h}. %,\quad a.e. \,x\in\Om,
	\label{Errii_2} 
\end{align}

To derive the desired error estimates, our task now is to control $\xi$ terms via the $\rho$ terms by using the above error equations. 

	\begin{theorem}[Error estimates] \label{FEMerr}
	The following error estimates hold:
	{\color{black} 
	\begin{align} \label{error_estimate1}
		\|e_h\|_{\Linf(0,T;L^2(\Os))} + \|e_{1h}\|_{\Linf(0,T;L^2(\Om))} + \|e_{2h}\|_{\Linf(0,T;L^2(\Om))} &\les h^2,\\
		\|(e_h)_x\|_{L^2(0,T;L^2(\Os))}   
		+ \|(e_{1h})_x\|_{L^2(0,T;L^2(\Om))}  +  \|e_h(0,\cd) \|_{L^2(0,T)} & \label{error_estimate2} \\
		+ \|e_{1h}(0,\cd)\|_{L^2(0,T)} + \|e_{1h}(1,\cd)\|_{L^2(0,T)} 	&\les  h. \nonumber
	\end{align}
}
	\end{theorem}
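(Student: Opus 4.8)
### Proof Proposal

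The plan is to use the standard parabolic finite-element technique: test the error equations \eqref{Err_2}--\eqref{Errii_2} against the computable error components $\xi_h$, $\xi_{1h}$, $\xi_{2h}$, bound the $\rho$-terms on the right-hand sides via the elliptic-projection estimates \eqref{elliptic_estimate1}--\eqref{elliptic_estimate3}, and close with a Gr\"onwall argument of the same flavor as the one used in Theorem~\ref{Apriori}. First I would set $v_h = \xi_h$ in \eqref{Err_2}, $w_h = \xi_{1h}$ in \eqref{Erri_2}, and use $\xi_{2h}$ as the multiplier in \eqref{Errii_2}; then I would add the three identities. On the left, the coupling boundary terms $\del\tP\,\xi_{1h}(0,\cd)\xi_h(0,\cd)$ and the coupling volume term $\frac{\Da}{K}(\xi_{2h},\xi_{1h})_\Om$ appear with the correct signs to be absorbed, exactly as in the a priori estimate; using the coercivity built into $\calA$ and $\calB$ (recall $\calA[\xi_h,\xi_h]\ge\del\|(\xi_h)_x\|_{L^2}^2 + \del\tP\,\xi_h^2(0,\cd)$ and the analogous lower bound for $\calB$, with the advection term $\Pe((\xi_{1h})_x,\xi_{1h})_\Om$ integrated to boundary terms) I would obtain on the left a sum of $\tfrac12\tfrac{d}{dt}$ of the $L^2$-norms plus the full ``energy'' dissipation $\del\|(\xi_h)_x\|^2 + \|(\xi_{1h})_x\|^2 + \tfrac{\Pe}{2}(\xi_{1h}^2(0,\cd) + \xi_{1h}^2(1,\cd))$.

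The right-hand side carries the projection errors. The key simplifications are structural: because $\calRh^A$ and $\calRh^B$ are the $\calA$- and $\calB$-elliptic projections, the terms $\calA[\rho_h,v_h]$ and $\calB[\rho_{1h},w_h]$ vanish identically, and because $\calPh$ is the $L^2$-projection, $(\rho_{2h},w_h)_\Om = 0$ so that term drops in the $\xi_{1h}$ equation. What survives is: the dual-norm terms $\<\p_t\rho_h,\xi_h\>$ and $\phi\<\p_t\rho_{1h},\xi_{1h}\>$, the boundary terms $\del\tP\,\rho_{1h}(0,\cd)\xi_h(0,\cd)$ and $\del\tP\,\rho_h(0,\cd)\xi_{1h}(0,\cd)$, and in \eqref{Errii_2} the terms $(\phi-1)\p_t\rho_{2h}$, $-\tfrac{\Da}{K}\rho_{2h}$, $\Da\rho_{1h}$ tested against $\xi_{2h}$. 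I would estimate $\<\p_t\rho_h,\xi_h\>$ by noting $\p_t\rho_h = \p_t(c - \calRh^A c) = (I-\calRh^A)\p_t c$ (the elliptic projection commutes with $\p_t$ by linearity and time-independence of $\calA$), hence $\|\p_t\rho_h\|_{L^2}\lesssim h^2\|\p_t c\|_{H^2}$, and likewise $\|\p_t\rho_{1h}\|_{L^2}\lesssim h^2\|\p_t c_1\|_{H^2}$, $\|\p_t\rho_{2h}\|_{L^2}\lesssim h^2\|\p_t c_2\|_{H^2}$; then Cauchy--Schwarz plus Young with a small parameter puts $\xi$-norms on the left and leaves $h^2$-order data on the right. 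For the boundary projection-error terms I would use the 1-d trace inequality (Lemma~\ref{trace_1d}) together with \eqref{elliptic_estimate1}--\eqref{elliptic_estimate2} to get $\|\rho_h(0,\cd)\|,\|\rho_{1h}(0,\cd)\| \lesssim h^{3/2}$ (or just $h$, which suffices), then Young's inequality again, absorbing the $\xi$-boundary contributions into the dissipation on the left. This presupposes the regularity $c\in H^1(0,T;H^2(\Os))$, $c_1\in H^1(0,T;H^2(\Om))$, $c_2\in H^1(0,T;H^2(\Om))$; I would either assume it as a hypothesis or reference the additional regularity that can be bootstrapped from the PDE system, and I expect this regularity bookkeeping to be the main obstacle, since the a priori theory as stated only delivers $H^1$ spatial regularity.

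After summing and rearranging, I would arrive at a differential inequality of the form $\tfrac{d}{dt}\Phi(t) + \kap\,\Psi(t) \le C\,\Phi(t) + C h^4 R(t)$, where $\Phi = \|\xi_h\|_{L^2}^2 + \phi\|\xi_{1h}\|_{L^2}^2 + (1-\phi)\|\xi_{2h}\|_{L^2}^2$, $\Psi$ collects the gradient and boundary dissipation terms, and $R(t)$ is a bounded combination of $\|c\|_{H^2}^2,\|c_1\|_{H^2}^2,\|c_2\|_{H^2}^2,\|\p_t c\|_{H^2}^2,\dots$ integrable on $(0,T)$. Since all three discrete initial data match their projections exactly (the initial conditions are the constant $1$ and $0$, which lie in $V^s_h$, $V^m_h$, so $\xi_h(0)=\xi_{1h}(0)=\xi_{2h}(0)=0$), Gr\"onwall's inequality (Lemma~\ref{gronwall}) gives $\|\Phi\|_{\Linf(0,T)} + \int_0^T\Psi\,dt \lesssim h^4$. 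Taking square roots yields $\|\xi_h\|_{\Linf(0,T;L^2)} + \|\xi_{1h}\|_{\Linf(0,T;L^2)} + \|\xi_{2h}\|_{\Linf(0,T;L^2)} \lesssim h^2$ and $\|(\xi_h)_x\|_{L^2(0,T;L^2)} + \|(\xi_{1h})_x\|_{L^2(0,T;L^2)} + \|\xi_h(0,\cd)\|_{L^2(0,T)} + \|\xi_{1h}(0,\cd)\|_{L^2(0,T)} + \|\xi_{1h}(1,\cd)\|_{L^2(0,T)} \lesssim h$. Finally, I would invoke the triangle inequality $e_h = \rho_h + \xi_h$ etc., and combine the $\xi$-bounds just obtained with the projection estimates \eqref{elliptic_estimate1}--\eqref{elliptic_estimate3} (noting $\|(\rho_h)_x\|_{L^2}\lesssim h\|c\|_{H^2}$ and that the $L^\infty(0,T;L^2)$ norm of each $\rho$ is $\mathcal{O}(h^2)$), together with a trace estimate for $\rho_h(0,\cd),\rho_{1h}(0,\cd),\rho_{1h}(1,\cd)$, to deduce \eqref{error_estimate1} and \eqref{error_estimate2}. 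A minor point I would check is that in the $\xi_{1h}$-equation the advection term $\Pe((\xi_{1h})_x,\xi_{1h})_\Om = \tfrac{\Pe}{2}(\xi_{1h}^2(1,\cd) - \xi_{1h}^2(0,\cd))$ combines with the $(\del\tP + \Pe)\xi_{1h}^2(0,\cd)$ term from $\calB$ so that all boundary contributions have a sign compatible with absorption — this is precisely the cancellation exploited in Theorem~\ref{Apriori}, so the same choice of Young parameters carries over.
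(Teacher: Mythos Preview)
Your approach is essentially the paper's: test \eqref{Err_2}--\eqref{Errii_2} with $\xi_h,\xi_{1h},\xi_{2h}$, use the orthogonality of $\calRh^A,\calRh^B,\calPh$ to drop $\calA[\rho_h,\cdot]$, $\calB[\rho_{1h},\cdot]$, $(\rho_{2h},\cdot)_\Om$, absorb the coupling terms via Young exactly as in Theorem~\ref{Apriori}, apply Gr\"onwall, and finish by triangle inequality with \eqref{elliptic_estimate1}--\eqref{elliptic_estimate3}. One structural difference: after the cross-terms $2\del\tP\,\xi_h(0,\cdot)\xi_{1h}(0,\cdot)$ are absorbed, no $\xi_h^2(0,\cdot)$ survives on the left, so the paper recovers $\|\xi_h(0,\cdot)\|_{L^2(0,T)}$ in a \emph{second} pass by integrating the $\xi_h$-inequality alone and feeding in the already-established bound on $\|\xi_{1h}(0,\cdot)\|$. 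Your one-step claim can be made to work with sharper Young parameters, but this needs to be spelled out.

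There is one genuine gap in your bookkeeping. Your trace bound $|\rho_h(0,\cdot)|,|\rho_{1h}(0,\cdot)|\lesssim h^{3/2}$ (and certainly ``just $h$, which suffices'') is \emph{not} enough for \eqref{error_estimate1}: it puts an $O(h^3)$ contribution into the right-hand side of the Gr\"onwall inequality, yielding only $\|\xi_h\|_{\Linf(0,T;L^2)}\lesssim h^{3/2}$ and hence $\|e_h\|_{\Linf(0,T;L^2)}\lesssim h^{3/2}$, not $h^2$. To obtain the optimal $L^2$ rate you must invoke the one-dimensional $L^\infty$ estimate for the elliptic projection, $\|\rho_h\|_{L^\infty},\|\rho_{1h}\|_{L^\infty}\lesssim h^2$, so that $\rho_h^2(0,\cdot),\rho_{1h}^2(0,\cdot)\lesssim h^4$ and the Gr\"onwall bound closes at $O(h^4)$. (The paper's proof is equally terse on this point, but the claimed $h^4$ bound on the forcing term $E$ does rely on it.)
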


	\begin{proof}
		Similar to the a-priori estimates, setting $\xi_h, \xi_{1h}$, and $\xi_{2h}$ to be the test functions, we obtain the following inequalities:
		\begin{align}\label{eh}
		&	\frac{1}{2}\frac{d}{dt} \|\xi_h\|^2_{L^2} + \del \|(\xi_h)_x\|^2_{L^2} + \del \tP  \Big( \xi_h^2(0,\cd) \Big) \\
			&\qquad 	\leq \frac{1}{2} \|\xi_h\|^2_{L^2}  +  \del \tP  (\eps_1+\eps_2) \xi_h^2(0,\cd) + \frac{\del \tP }{4\eps_1}\xi_{1h}^2(0,\cd) + E_h,  \nonumber \\
		&	\frac{\phi}{2}\frac{d}{dt}\|\xi_{1h}\|^2_{L^2} + \Da\|\xi_{1h}\|^2_{L^2} + \|(\xi_{1h})_x\|^2_{L^2} + \frac{\Pe}{2} \xi^2_{1h}(1,\cd) + \(\del \tP  + \frac{\Pe}{2} \) \xi_{1h}^2(0,\cd) \label{e1h}  \\
		&\qquad \leq  
				\Big(\frac{\Da}{K} + \frac{\phi}{2} \Big) \|\xi_{1h}\|^2_{L^2}  
				+ \frac{\Da}{2K}\|\xi_{2h}\|^2_{L^2} +\Big( \frac{\del \tP }{4\eps_4} + \frac{\del \tP }{4\eps_5} \Big) \xi_{1h}^2(0,\cd) \nonumber\\
		&\qquad \quad	+ \del \tP  \eps_4 \xi^2_h(0,\cd)  +  E_{1h}, \nonumber \\
		&	\frac{1-\phi}{2}\frac{d}{dt} \|\xi_{2h}\|^2_{L^2} + \frac{\Da}{K} \|\xi_{2h}\|^2_{L^2} 	\label{e2h}\\ 
		&\qquad \leq \( \Da + \frac{1-\phi}{2} + \frac{\Da}{2K} \) \|\xi_{2h}\|^2_{L^2} + \frac{\Da}{2} \|\xi_{1h}\|^2_{L^2} + E_{2h}, \nonumber
		\end{align}
		where 
		\begin{align*}
		&E_h = \frac{\del \tP }{4\eps_2} \rho_{1h}^2(0,\cd) + \frac{1}{2} \|\p_t \rho_h\|^2_{L^2}, \\
		&E_{1h} = \eps_5 \del \tP   \rho_h^2(0,\cd) + \frac{\Da}{2K} \|\rho_{2h}\|^2_{L^2} + \frac{\phi}{2}\|\p_t \rho_{1h}\|^2_{L^2}, \\
		&E_{2h} = \frac{\Da}{2} \|\rho_{1h}\|^2_{L^2} + \frac{1-\phi}{2} \|\p_t \rho_{2h}\|^2_{L^2} + \frac{\Da}{2K} \|\rho_{2h}\|^2_{L^2}.
		\end{align*}

		Choose appropriate values of $\eps_2$ and $\eps_5$ such that $$ \tP  \eps_2 \xi_h^2(0,\cd) \leq \frac{\del}{2} \|\xi_h\|^2_{H^1(\Os)}  \quad\mbox{and}\quad
		 \frac{\del \tP }{4\eps_5} \xi_{1h}^2(0,\cd) \leq \frac{1}{2}\|\xi_{1h}\|^2_{H^1(\Om)}.$$
		Furthermore, choosing 
			$\eps_1=\eps_4=\frac{1}{2}$, as well as 
			$$\gam := \min\Bigl\{\frac{\phi}{2},\frac{1-\phi}{2}\Bigr\}, \quad
			\theta:=\max\Bigl\{\frac{1+\del}{2},\frac{\Da}{K} + \frac{\phi}{2} + \frac{\Da}{2},\Da + \frac{1-\phi}{2} \Bigr\},$$ 
		and adding (\ref{eh}) to (\ref{e2h}), yields
			\begin{multline*}
			\frac{d}{dt} \Big( \|\xi_h\|^2_{L^2} + \|\xi_{1h}\|^2_{L^2} + \|\xi_{2h}\|^2_{L^2} \Big) + \frac{2\del}{\gam} \|(\xi_h)_x\|^2_{L^2} + \frac{2}{\gam} \|(\xi_{1h})_x\|^2_{L^2}
			\\ \hspace{1cm} + \frac{\Pe}{\gam} \xi_{1h}^2(0,\cd) + \frac{\Pe}{\gam} \xi^2_{1h}(1,\cd) 
			  	\leq \frac{2\theta}{\gam} \Big( \|\xi_h\|^2_{L^2} + \|\xi_{1h}\|^2_{L^2} + \|\xi_{2h}\|^2_{L^2} \Big) +2E(t),
		\end{multline*}
	where $E:=E_h + E_{1h} + E_{2h}$.

	By using Gronwall's inequality given in Lemma \ref{gronwall} and taking the supremum over $(0,T)$, we get
		\begin{align*}
		&	\|\xi_h\|^2_{\Linf(0,T;L^2)} + \|\xi_{1h}\|^2_{\Linf(0,T;L^2)} + \|\xi_{2h}\|^2_{\Linf(0,T;L^2)} + \frac{2\del}{\gam} \|(\xi_h)_x\|^2_{L^2(0,T;L^2)} \\
		&\qquad	+ \frac{2}{\gam} \|(\xi_{1h})_x\|^2_{L^2(0,T;L^2)}   
			+ \frac{\Pe}{\gam} \|\xi_{1h}(0,\cd)\|^2_{L^2(0,T)} + \frac{\Pe}{\gam} \|\xi_{1h}(1,\cd)\|^2_{L^2(0,T)} \\
		&\qquad\qquad 	  	\leq 2e^{2\theta T/\gam} \|\mathcal{E}(t)\|_{L^2(0,T)} \leq C(e^{2\theta T/\gam}) \,\, h^4 .
		\end{align*}
		In particular,
			\[ \|\xi_{1h}(0,\cd)\|^2_{L^2(0,T)} \les h^4. \]

		By applying (\ref{eh}) with the above choices of $\eps_1$ and $\eps_2$, we have
		\begin{equation*}
			\frac{1}{2}\frac{d}{dt} \|\xi_h\|^2_{L^2} + \frac{\del}{2} \|(\xi_h)_x\|^2_{L^2} + \frac{\del \tP }{2} \xi_h^2(0,\cd) 
				\leq \frac{1}{2} \|\xi_h\|^2_{L^2} + \frac{\del \tP }{2}\xi_{1h}^2(0,\cd) + E_h.
		\end{equation*}
		Integrating over $(0,T)$ in $t$ yields
		\begin{align*}
			\|\xi_h(\cd,T)\|^2_{L^2} &+ \del \|(\xi_h)_x\|^2_{L^2(0,T;L^2)} + \del \tP  \|\xi_h(0,\cd) \|^2_{L^2(0,T)} \\
							    &\leq \del \tP  \|\xi_{1h}(0,\cd)\|^2_{L^2(0,T)} + 2 \|E_h\|_{L^2(0,T)} + \|\xi_h(\cd,0)\|^2_{L^2}  \les  h^4 .
		\end{align*}
		In particular,
			\[ \|\xi_h(0,\cd) \|^2_{L^2(0,T)} \les Ch^4 . \]

In summary, we have shown that 
		\begin{align*}
		&	\|\xi_h\|^2_{\Linf(0,T;L^2)} + \|\xi_{1h}\|^2_{\Linf(0,T;L^2)} + \|\xi_{2h}\|^2_{\Linf(0,T;L^2)} + \|(\xi_h)_x\|^2_{L^2(0,T;L^2)}  \\ 
		&\qquad	+ \|(\xi_{1h})_x\|^2_{L^2(0,T;L^2)}   
		 	+ \|\xi_h(0,\cd) \|^2_{L^2(0,T)} + \|\xi_{1h}(0,\cd)\|^2_{L^2(0,T)}  \\
		 &\qquad\qquad	+ \|\xi_{1h}(1,\cd)\|^2_{L^2(0,T)} \les  h^4,
		\end{align*}
which, combined with \eqref{elliptic_estimate1}--\eqref{elliptic_estimate3} and an application of the triangle inequality, concludes the proof. 
	\end{proof}

{\color{black}
\begin{remark}
	(a) Both estimates \eqref{error_estimate1} and \eqref{error_estimate2} are optimal compared to the linear finite-element interpolation errors. 
	
	(b) The $L^2$-norms of $(\xi_h)_x$ and $(\xi_{1h})_x$ exhibit a superconvergence property. 
	
	(c) If $r$th ($r>1$)-order finite-element space is used in place of the linear finite-element space and we assume that the solution $(c,c_1,c_2)$ is sufficiently regular, then it can be proved that the rates of convergence in \eqref{error_estimate1} and \eqref{error_estimate2} will be improved to $O(h^{r+1})$ and $O(h^r)$, respectively. 
\end{remark}
}

%%%
\subsection{Formulation of fully discrete schemes}\label{sec-4-2}
To get a computable fully discrete method, we need to discretize (\ref{FEM}) to (\ref{FEMii}) in time by using any time-stepping scheme, such as the Euler, implicit Euler, Runge-Kutta,
backward differentiation formula (BDF), and Crank-Nicolson methods. Below, we use the simplest Euler method to demonstrate the procedure. For each $j$, the Euler method is given by
	\begin{align*}
		&\sum_{i=1}^{N+1} \frac{y_{0,i}^{k+1}-y_{0,i}^k}{\Del t} (\psi^s_i,\psi^s_j)_\Os + y_{0,i}^k \calA[\psi^s_i,\psi^s_j] = \del \tP c_{1h}(0,\cd) \psi^s_j(0),   
		\\
		&\phi \sum_{i=1}^{N+1} \frac{y_{1,i}^{k+1}-y_{1,i}^k}{\Del t} (\psi^m_i,\psi^m_j)_\Om + y_{1,i}^k \calB[\psi^m_i,\psi^m_j] \\
		&\hskip 1.5in = \del \tP c_h(0,\cd) \psi^m_j(0) + \frac{\Da}{K} \sum_{i=1}^{N+1} y_{2,i}^k (\psi^m_i,\psi^m_j)_\Om,  
		\\
		&(1-\phi) \frac{y_{2,j}^{k+1}-y_{2,j}^k}{\Del t} \psi^m_j(x) + \frac{\Da}{K} y_{2,j}^k \psi^m_j(x) = \Da  y_{1,j}^k \psi^m_j(x).
	\end{align*}
	They can be rewritten in matrix-vector form, as follows:
	\begin{align} \label{linear_system1}
		\Psi_s \bm y_0^{k+1} &= (\Psi_s - \Del t A) \bm y_0^k + \Del t \bm f_0^k, 
		\\
		\Psi_m \bm y_1^{k+1} &= \( \Psi_m - \frac{\Del t}{\phi} B \) \bm y_1^k + \frac{\Del t \Da}{\phi K} \Psi_m \bm y_2^k + \frac{\Del t}{\phi} \bm f_1^k , 
		\label{linear_system2}\\
		\bm y_2^{k+1} &= \( 1-\frac{\Del t \Da}{(1-\phi)K} \) \bm y_2^k + \frac{\Del t \Da}{1-\phi} \bm y_1^k  , \label{linear_system3}
	\end{align}
	where $\bm f_0^k = [0; ...; 0; y^k_{1,1}]$ and $\bm f_1^k = [y^k_{0,N+1}; 0; ...; 0]$.

It is well known that the Euler method results in an error of order $\calO(\Del t)$; therefore,  it can be shown that the fully discrete error is of order $\calO(\Del t + h^2)$, provided that the Courant-Friedrichs-Lewy (CFL) condition $\Del t < \min\{\frac{h^2}{2\del}, \frac{\phi h^2}{2}\}$ holds.

%%%%%%%%%%%%%%%%%%%%%%%%%%%%%%%%%%%%%% 
\section{Numerical simulations}\label{sec-5}

	In this section we present some numerical simulation results, which were computed by using Matlab R2022a. Since we aimed to solve a coupled system, a decoupling strategy was needed. In addition, due to the biological nature of the system, we propose a multi-rate time-stepping procedure to solve the system in order to save computation time. Several comparisons are made in \Crefrange{sec-5-1}{sec-5-2} among different schemes, different time-stepping strategies, and different decoupling strategies. Numerical results of the simulations are presented in \Cref{sec-5-3}.
	
	The following notations are adopted in this section. Let $h$ denote the mesh size and $\Del t$ the time step size. When those values differ in the two domains, we denote them as $h_\Os$, $\Del t_\Os$ and $h_\Om$, $\Del t_\Om$, respectively. Furthermore, let $N_0 := l/h_\Os, N := 1/h_\Om, n_0 := T/\Del t_\Os$, and $n := T/\Del t_\Om$.

%%%
\subsection{Comparison between two decoupling strategies}\label{sec-5-1}

We propose two decoupling strategies for solving the system given by \eqref{linear_system1}--\eqref{linear_system3}. The first strategy is parallelizable, updating $c$ and $c_2$ simultaneously, followed by updating $c_1$, as shown in \Cref{AlgI}. The second strategy is a sequential update, which is shown in \Cref{AlgII}.

	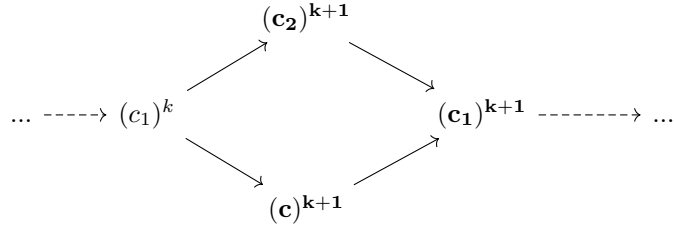
\begin{figure}%[h]
		\begin{center}
			\begin{tikzcd}
				&& \mathbf{(c_2)^{k+1}} \ar[dr] 
				&
				&[1.5em] \\
				...\ar[r,dashed] & (c_1)^k \ar[ur] \ar[dr]
				&
				& \mathbf{(c_1)^{k+1}} \ar[r,dashed]
				& ... \\
				&&\mathbf{(c)^{k+1}} \ar[ru]
				&
				&
			\end{tikzcd}
		\caption{Algorithm I based on decoupling strategy \#1.}
		\label{AlgI}
		\end{center}
	\end{figure}

	\begin{figure}%[h]
		\begin{center}
			\begin{tikzcd}
				...\ar[r,dashed] & \Big( (c),(c_1),(c_2) \Big)^k \ar[r] & \mathbf{(c_2)^{k+1}} \ar[r] & \mathbf{(c_1)^{k+1}} \ar[r] & \mathbf{(c)^{k+1}} \ar[r,dashed] &...
 			\end{tikzcd}
		\caption{Algorithm II based on decoupling strategy \#2.}
			\label{AlgII}
		\end{center}
	\end{figure}
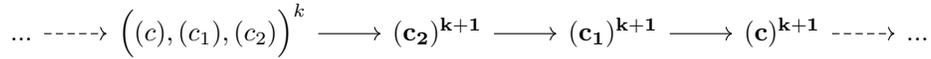

% CPU
	Test cases were run for $T_{end} = 1,10,100,200$, also applying different stepping strategies. 
	In some cases, Algorithm I ran faster than Algorithm II, and, in some cases, it was vice versa. Furthermore, the difference in total CPU time was within 5\% between the two algorithms. 
%  Accuracy
	As for the accuracy comparison, we computed the numerical ``true" solutions first, using $T=1, \widetilde N_0= 1000,  \widetilde N = 500,  \widetilde n = 2.5816\times 10^{6}.$ Then, the approximate solutions were computed for $T=1, N_0 = 50, N = 25, n = 6454.$
	The relative errors in $c$ and $c_2$ were both under $10^{-3}$; the relative error in $c_1$ was around $10^{-2}$ with Algorithm I, and it was about 10\% more accurate than Algorithm II. This can be explained by the fact that $c_1$ is updated with the most recent coupling values in Algorithm I.

%%%
\subsection{Comparison between two time-stepping strategies}\label{sec-5-2}

	Since each subdomain possesses distinct biological/physical properties, it is natural to use different mesh sizes for different subdomains. In addition,   the time step sizes can also be distinct in different subdomains; one scenario was that one time step size was taken as a constant multiple of the other.
	Recall that the Robin boundary condition of $\Os$ at $x=0$ states that $c_x + \tP c=\tP c_1$, where $\tilde{P} = 45000$. Therefore, it is natural to use a fine spatial mesh in $\Os$. This does not have much effect on the mesh size despite the CFL condition since the diffusion coefficient was extremely small, at $\del \sim 10^{-7}$. Therefore, the explicit time-stepping in fact demonstrates no disadvantage in this regard.

	We restricted ourselves to Algorithm I and compared the performance of the naive and the multi-rate time-stepping strategies. 
	When $N_0=2N$, all three errors decreased to around 10\%, and, in the case of $c_2$, the errors decreased to around 1\% of their naive counterparts, without increasing CPU time. \Cref{tab:steppingACCU} shows the relative errors of the three equations respectively. 
	\begin{table}%[h]
	\centering
	\caption{Stepping strategies: accuracy comparison.}
		\begin{tabular}{ c c c c } 
		   \hline
		   & Rel. err. in $c$ & Rel. err. in $c_1$ & Rel. err. in $c_2$ \\
		   \hline
		$N_0=N$ & 2.75e-3 & 0.1212 & 7.12e-4 \\
		$N_0=2N$ & 3.52e-4 & 8.42e-3 & 1.59e-6 \\
		\hline
		\end{tabular}
	
	\label{tab:steppingACCU}
	\end{table}
	
	It is worth pointing out that increasing the ratio $N_0/N$ further would increase CPU time without seeing any improvement in the accuracy. Furthermore, choosing $N_0<N$ not only increased CPU time, it also resulted in larger error.

%%%
\subsection{Simulation results}\label{sec-5-3}

	We computed the concentrations $c, c_1, c_2$ with $T$ taken as 10 minutes, 30 minutes, 1 hour, 6 hours, and 24 hours after the stent insertion, using Algorithm I. Computed results are summarized in \Cref{fig:snapsA} and \Cref{fig:snapsB}.
	It can be observed that the diffusion within the stent is slow. This is in line with the extremely small diffusion coefficient, i.e., $\del \ll1$. However, at the interface, the advection of the drug into the media region is initially extremely fast since it is proportional to the concentration difference, which led to the steep drop near $x=0$. 
	As for the media concentrations, $c_1$ increased for a period of time before eventually dropping due to the absorption of the drug into the  muscle cells, where a steady increase in drug concentration is demonstrated. 
	In addition, \Cref{fig:interface1} shows the interface concentration $c_1(0,t)$ over the first 24 hours after stent insertion, reproducing the profile shape from \cite{McGinty13}, which was obtained via an analytic method. 
	\begin{figure}[h]
	  \centering
 		 \includegraphics[width=0.8\linewidth]{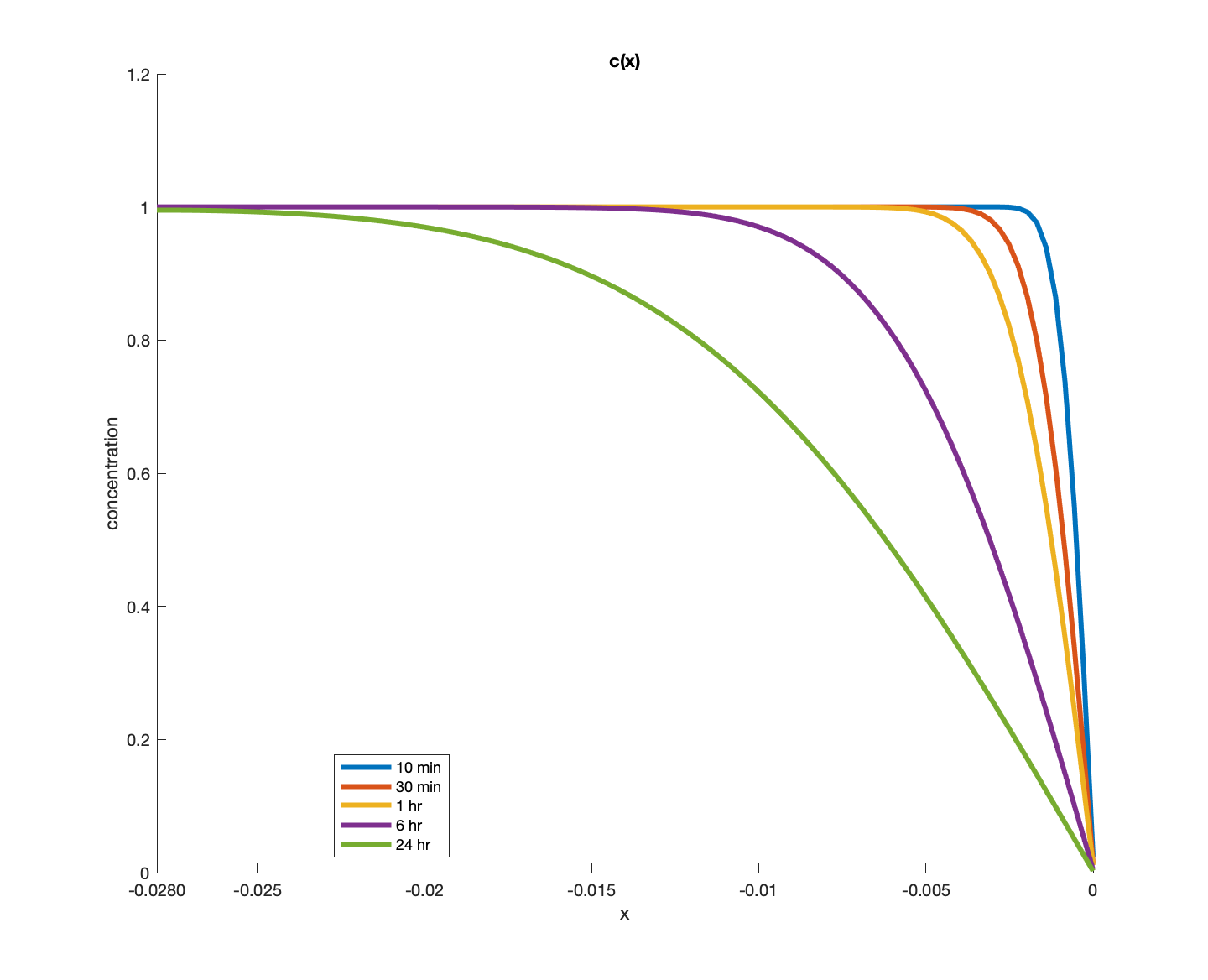}
	  \caption{Stent concentrations.}
	\label{fig:snapsA}
	\end{figure}
	
	\begin{figure}[h]
	  \centering
	 	 \includegraphics[width=0.9\linewidth]{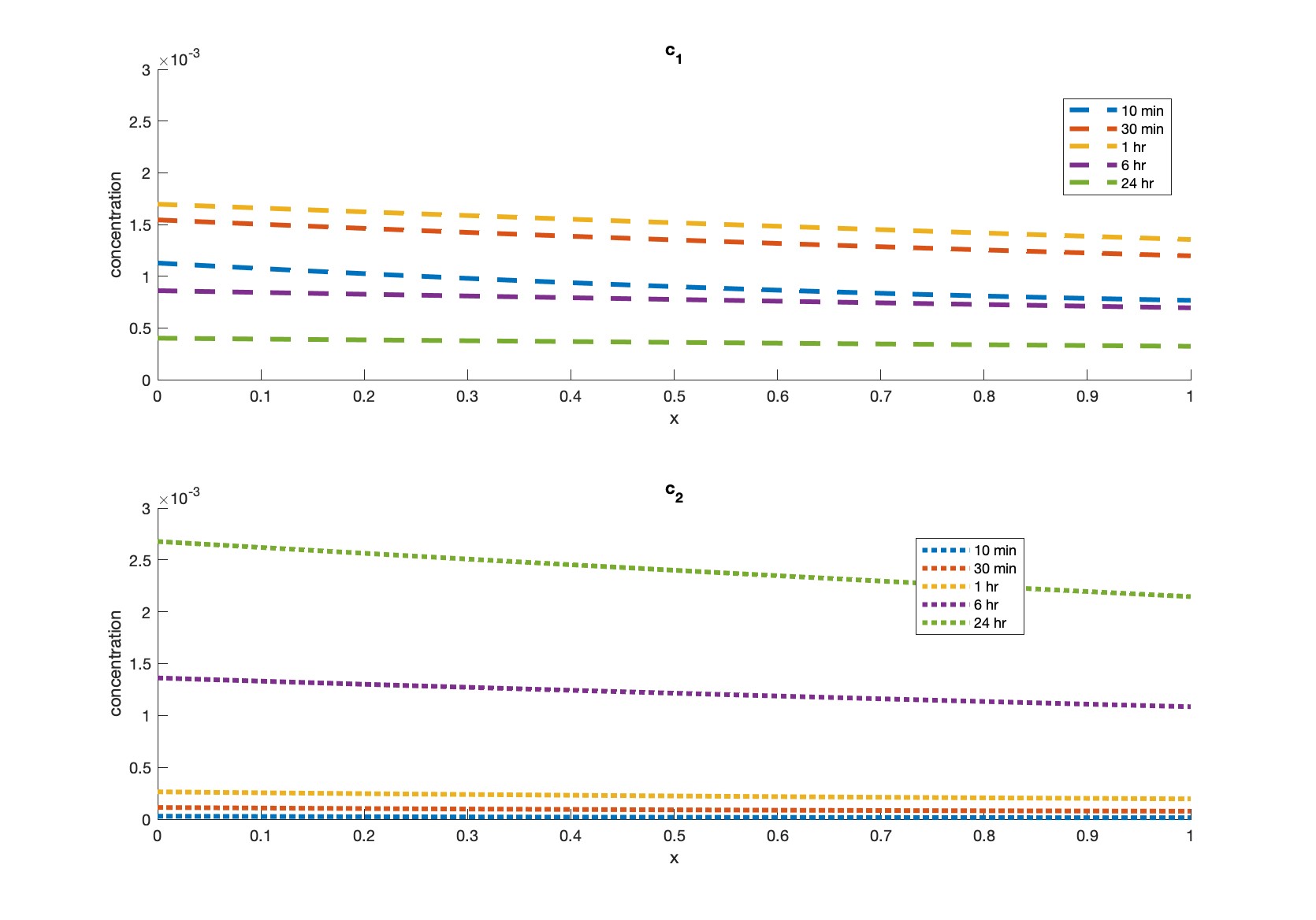}
	  \caption{Wall concentrations.}
	\label{fig:snapsB}
	\end{figure}

	\begin{figure}[h]
	    \centering
	    \includegraphics[width=\textwidth]{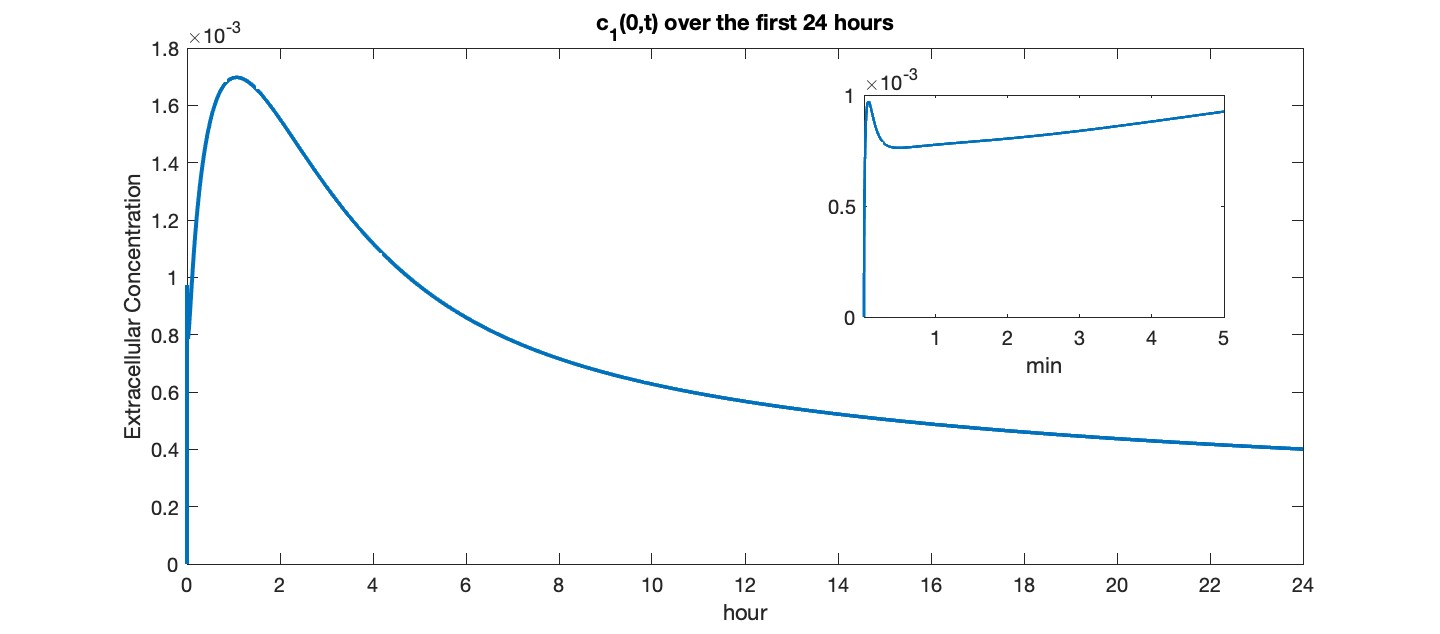}
	\caption{Interface concentration: extracellular.}
	\label{fig:interface1}
	\end{figure}

	The finite-element simulation results presented in this section were also confirmed by those obtained via a finite-difference method (which are not presented here to save space). The $\Linf(L^2)$-error between the finite-element and the finite-difference results was found to be around $5\x 10^{-5}$ when $h_\Os = 2.8\x 10^{-4}$ and $h_\Om = 0.01$.

%%%%%%%%%%%%%%%%%%%%%%%%%%%%%%%%%%%%%% 
\section{Two-dimensional generalization}\label{sec-6}
Below, we propose a two-dimensional drug-release model which is an extension of the one-dimensional model studied in the previous sections. \Cref{2d} shows a two-dimensional sketch of the arterial wall. 

Again, as in the one-dimensional case, $c,c_1,$ and $c_2$ represent the unknown concentrations in the stent, in the extracellular matrix, and in the muscle cells, respectively. All other quantities are given coefficients. 
Unlike the one-dimensional model, we note that there are four additional pieces of the  boundary. New boundary conditions must be prescribed there. For these four pieces, Dirichlet boundary conditions were imposed on $\Gam^a$ and $\Gam^b$, whereas the periodic boundary conditions were used on $\Gam^c$ and $\Gam^d$ for $c_1$.

\begin{figure}\centering
\begin{tikzpicture}[scale=0.65]
\draw[fill=black!10,even odd rule] (0,0) circle (1.5 cm) circle (2 cm);
\node at(0,-0.5) {Lumen};
\node at(0,1.7) {Wall};
\draw [fill=black!30] (-0.3,1.3) rectangle (0.3,1.5);
\draw [fill=black!30] (-0.3,-1.3) rectangle (0.3,-1.5);
\draw [fill=black!30] (-1.5,-0.3) rectangle (-1.3,0.3);
\draw [fill=black!30] (1.5,-0.3) rectangle (1.3,0.3);
%\node at (-0.5,0.6) {Stent};
%\draw[->] (-0.3,0.75) to (0,1.3);
%\draw[->] (-0.7,0.35) to (-1.3,0);
\node at (-3,1) {Stent};
\draw[->] (-2.5,1) to (0,1.4);
\draw[->] (-2.5,1) to (-1.4,0);
\draw [thick, red] (1.6,0) circle (0.6cm);
\draw [dotted, very thick, blue] (0,0) -- (1.3,0);
%\draw [thick, blue] (1.3,0) -- (2,0);
\filldraw [black] (0,0) circle (1pt);
%\filldraw [blue] (1.3,0) circle (1.5pt);
%\filldraw [blue] (1.5,0) circle (1.5pt);
%\filldraw [blue] (2,0) circle (1.5pt);
\draw [blue, thick] (1.3,-0.3) rectangle (2,0.3);
\draw[->,red,thick] (1.7,0.35) to[bend left] (4.5,0.35);
%

%%%
%%%
\filldraw [line width = 0.3mm, fill=black!30] (5,-1.5) -- (6.5,-1.5) -- (6.5,1.5) -- (5,1.5) -- (5,-1.5);
\filldraw [line width = 0.3mm, fill=black!10] (6.5,-1.5) -- (11,-1.5) -- (11,1.5) -- (6.5,1.5) -- (6.5,-1.5);
%\draw [line width = 0.3mm] (6.5,-1.5) -- (6.5,1.5);
%
\node at (5.8,0) {$\bm\Os$};
\node at (8.75,0) {$\bm\Om$};
\node at (4.7,-0.7) {\textcolor{blue}{$\Gam^s$}};
\node at (6.7,-0.7) {\textcolor{blue}{$\Gam$}};
\node at (11.4,-0.7) {\textcolor{blue}{$\Gam^m$}};
\node at (5.8,1.8) {\textcolor{blue}{$\Gam^b$}};
\node at (5.8,-1.8) {\textcolor{blue}{$\Gam^a$}};
\node at (8.75,1.8) {\textcolor{blue}{$\Gam^d$}};
\node at (8.75,-1.8) {\textcolor{blue}{$\Gam^c$}};
\end{tikzpicture}
\caption{Arterial wall; schematic diagram in 2-d.}
\label{2d}
\end{figure}
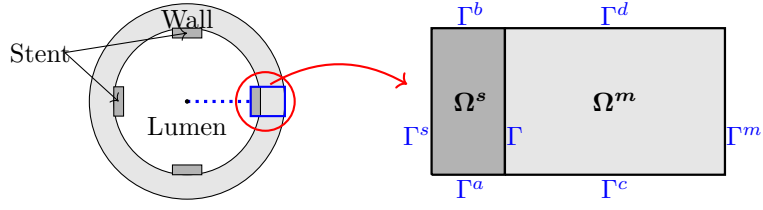

Essentially, all of the PDE and numerical analysis results for the 1-d model can be extended to the 2-d model; this includes a priori estimates and well-posedness proofs, as well as the finite-element convergence and error estimates. One notable difference is that the boundary of the 2-d domain consists of 1-d line segments; handling the 2-d  boundary terms requires some additional and more involved trace inequalities. Below, we only state the key formulations and main results, without giving detailed derivations and proofs, to save space. However, it should be noted that, although the PDE and numerical analyses are similar, the computer simulations and coding are much more complicated in the 2-d case; we will present those results elsewhere. 

Our two-dimensional model is given as follows:
%\textbf{Modified 2D System (with dimensions/units):}
\begin{alignat}{3}
   \p_t c - \text{div}\big(D(x) \nab c\big) &= 0,  								&&x\in\Os,          &&t\in(0,T], \label{2dPDE}\\
   D(x) \nab c \cd \bm{n} + \be c &= \be c_1,  								&&x\in\Gam,       &&t\in(0,T], \label{2dBC}\\   
   D(x) \nab c \cd \bm{n} &= 0,  											&&x\in\Gam^s, \,&&t\in(0,T], \label{2dBCl}\\
   c(x,t) &=  a(x,t),   													&&x\in\Gam^a, \,&&t\in(0,T], \label{2dBCa}\\
   c(x,t) &=  b(x,t),   													&&x\in\Gam^b, \,&&t\in(0,T], \label{2dBCb}\\
   c &= c_0,  														&&x \in \overline\Os, &&t=0; \label{2dIC}\\[8pt]
   \phi \p_t c_1+ \bm v \cd \nab c_1 - \text{div}\big(D_1(x) \nab c_1\big) +\al c_1 &=  \frac{\al}{\kap} c_2,  
   																&&x\in\Om,       &&t\in(0,T] , \label{2dPDEi}\\
   D_1(x) \nab c_1 \cd \bm{n} - \bm{v}\cd\bm{n} c_1 &= D(x) \nab c \cd \bm{n},  &\quad&x\in\Gam,     &&t\in(0,T] , \label{2dBCi}\\
   D_1(x) \nab c_1 \cd \bm{n}_1 &= 0,  									&&x\in\Gam^m, &&t\in(0,T] ,  \label{2dBCir}\\
   c_1(x) - c_1(x+L_2e_2)  &= 0,   										&&x\in\Gam^c, &\quad&t\in(0,T],  \label{2dBCi1}\\
   \nab c_1(x) \cd \bm{n}_1 - \nab c_1(x+L_2e_2) \cd \bm{n}_1 &= 0,  			&&x\in\Gam^c, &\quad&t\in(0,T],  \label{2dBCi1n}\\
   c_1 &= 0,  														&&x \in \overline\Om, &&t=0;  \label{2dICi} \\[8pt]
    (1-\phi) \p_t c_2 + \frac{\al}{\kap } c_2 &= \al c_1,  			&&x\in\Om, \,&&t\in(0,T] , \label{2dODEii}\\
    c_2 &= 0,  														&&x \in \overline\Om, \,&\quad&t=0. \label{2dICii}
\end{alignat}
We note that \eqref{2dBCi1} and \eqref{2dBCi1n} represent the periodic boundary conditions for $c_1$.

\begin{theorem}[Well-posedness in 2-d]\label{2-dwellposedness}
Under some reasonable assumptions on the coefficients, there exists a unique weak solution $(c,c_1,c_2)$ satisfying the following for any $T>0$:
\begin{gather*}
c \in L^\infty(0,T;L^2(\Os)) \cap L^2(0,T;H^1(\Os)) \cap H^1(0,T;H^{-1}(\Os)),\\
c_1 \in L^\infty(0,T;L^2(\Om)) \cap L^2(0,T;H_{per}^1(\Om)) \cap H^1(0,T;H^{-1}(\Om)),\\
c_2 \in {\color{black} H^1(0,T;L^2(\Om)), }
\end{gather*}
and, for $t\in (0,T]$,
\begin{alignat*}{2} 
\< \p_t c, v \>_{\V^{s'}\x \V^s} + \calA[c, v; t] &= \< \be c_1, v \>_{\Gam} &\quad& \forall \,v\in H^1(\Os),   \\
\< \phi \p_t c_1,w \>_{\V^{m'}\x\V^m} + \calB[c_1,w;t] &= \<  \be c, w \>_{\Gam} + \textstyle\frac{\al}{\kap } (c_2,w)_{\Om} && \forall \,w\in H_{per}^1(\Om),   \\
(1-\phi) \p_t c_2 + \textstyle\frac{\al}{\kap} c_{2} &= \al c_{1} &\qquad& a.e. \,x\in\Om, \\
c(\cd,0)=c_0(x)\in L^2(\Os), \quad c_1(\cd,0) &\equiv c_2(\cd,0) \equiv 0.
\end{alignat*}
%with the initial conditions $c(\cd,0)=c_0(x)\in L^2(\Os), c_1(\cd,0) \equiv c_2(\cd,0) \equiv 0$.
Here, $$H^1_{per}(\Omega^m):= \{u \in H^1: u(x) = u(x + L_2e_2) \text{ and } \nab u(x) = \nab u(x + L_2e_2), \, a.e. x\in \Gam^c \},$$
$(\cd,\cd)$ denotes the $L^2$-inner product, $\<\cd,\cd\>$ denotes the duality pairing, and
\begin{align*}
\calA[u,v;t] &:=  \big(D\nab u, \nab v\big)_{\Os} + \<  \be u, v \>_{\Gam}, \\
\calB[u,w;t] &:= \big( D_1\nab u, \nab w \big)_{\Om} + \big( \bm v \cd \nab u, w \big)_{\Om} + \al \big( u, w \big)_{\Om}  + \big\< (\be - \bm{v}\cd\bm{n}_1) u, w \big\>_{\Gam}.
\end{align*}
\end{theorem}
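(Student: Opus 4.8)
The plan is to mirror the one-dimensional argument of Sections~\ref{sec-3} almost verbatim, replacing each one-dimensional trace estimate by its two-dimensional counterpart and carrying the time-dependent coefficients through the bilinear forms. The only genuinely new ingredients are (i) the trace inequality $\|u\|_{L^2(\Gam)}\le C\|u\|_{H^1(\Os)}$ (and the analogue on $\Gam$ for functions in $H^1(\Om)$), which replaces Lemma~\ref{trace_1d}, and (ii) care with the periodic space $H^1_{per}(\Om)$, which requires checking that the Galerkin basis can be chosen inside $H^1_{per}(\Om)$ and that the resulting discrete spaces are dense in $H^1_{per}(\Om)$. The ``reasonable assumptions on the coefficients'' should be spelled out as: $D,D_1$ uniformly elliptic and bounded, $\bm v\in L^\infty$ with $\operatorname{div}\bm v$ controlled (or $\bm v\cd\bm n_1\ge 0$ on $\Gam^m$, or a smallness/sign condition sufficient for a G\aa rding inequality), $\be>0$, $\al,\kap>0$, $\phi\in(0,1)$, and $a,b$ with enough regularity to admit a lift; the boundary data $a,b$ are homogenized first by subtracting a suitable lifting, so that without loss of generality one may assume $a\equiv b\equiv 0$ and absorb the resulting data terms into the right-hand side.

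First I would establish the a priori estimates exactly as in Theorem~\ref{Apriori}: test the first equation with $v=c$, the second with $w=c_1$, the third with $c_2$, use Young's inequality on the coupling boundary terms $\<\be c_1,c\>_\Gam$ and $\<\be c,c_1\>_\Gam$ with the trace inequality to move them into $\del$-fractions of the Dirichlet energies, handle the advection term $(\bm v\cd\nab c_1,c_1)_\Om$ by integration by parts (producing a harmless $\operatorname{div}\bm v$ term and a boundary term on $\Gam\cup\Gam^m$ that the sign/structural assumption controls), add the three inequalities, and apply Gronwall (Lemma~\ref{gronwall}) to obtain the energy bound in $L^\infty(0,T;L^2)\cap L^2(0,T;H^1)$ for $c$ and $c_1$ and in $L^\infty(0,T;L^2)$ for $c_2$; then bound $\p_t c,\p_t c_1$ in the dual norms and $\p_t c_2$ in $L^2(0,T;L^2)$ by testing against arbitrary elements of the relevant spaces and using continuity of $\calA[\cdot,\cdot;t]$, $\calB[\cdot,\cdot;t]$ uniformly in $t$. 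Uniqueness is then immediate by linearity, as in Theorem~\ref{Uniqueness}. Existence follows the Galerkin scheme of Lemma~\ref{ode_soln} and Theorem~\ref{Existence}: build finite-dimensional subspaces $V^s_h\subset H^1(\Os)$ and $V^m_h\subset H^1_{per}(\Om)$ (e.g.\ finite elements respecting the periodic identification on $\Gam^c$), solve the resulting linear ODE system (well-posed since the mass matrices are positive definite and the coefficients are, say, continuous in $t$), extract weakly/weak-$*$ convergent subsequences using the uniform a priori bounds, pass to the limit in each equation using that $\calA[\cdot,v_h;t]$ and $\calB[\cdot,w_h;t]$ are bounded linear functionals for fixed test functions, and identify the weak time derivatives via integration against $C_0^\infty(0,T)$ as before.

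The main obstacle I anticipate is the treatment of the advection term together with the boundary term $\<(\be-\bm v\cd\bm n_1)u,w\>_\Gam$ in $\calB$: unlike the 1-d case where $\Pe$ appeared with a favorable sign after integration by parts, in 2-d one must verify that the combination of the $\operatorname{div}\bm v$ volume contribution and the boundary contributions on $\Gam$, $\Gam^m$, and the periodic pair $\Gam^c$ (which cancels by periodicity) yields a G\aa rding-type inequality $\calB[w,w;t]\ge c_0\|w\|_{H^1(\Om)}^2-\lambda\|w\|_{L^2(\Om)}^2$ with constants uniform in $t$; this is precisely where the ``reasonable assumptions'' must be invoked, and getting the correct sign conditions on $\bm v\cd\bm n$ at the interface $\Gam$ and on $\Gam^m$ is the delicate point. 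A secondary technical point is that $\calB$ is no longer symmetric, so the elliptic projection $\calRh^B$ used later for error estimates must be defined via the bilinear form rather than an inner product, and its stability requires the Lax--Milgram / G\aa rding estimate just mentioned; for the well-posedness statement itself, however, only the continuity and coercivity (up to lower-order terms) of $\calB[\cdot,\cdot;t]$ are needed, and the compactness argument is otherwise insensitive to the lack of symmetry.
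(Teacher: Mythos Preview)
Your proposal is correct and follows exactly the approach the paper itself indicates: the paper does not give a detailed proof of Theorem~\ref{2-dwellposedness} but states that ``all of the PDE and numerical analysis results for the 1-d model can be extended to the 2-d model'' with the only notable difference being that ``handling the 2-d boundary terms requires some additional and more involved trace inequalities.'' Your outline---mirror the 1-d Galerkin/compactness argument, replace Lemma~\ref{trace_1d} by the standard $H^1\to L^2(\Gam)$ trace inequality, and take care with the periodic subspace---is precisely this extension, and in fact supplies more detail (the G\aa rding structure for $\calB$, the lifting of the Dirichlet data $a,b$, the density of periodic finite-element spaces) than the paper does.
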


Before introducing the finite-element result, we first introduce the finite-element  spaces and the concept of approximate weak solutions.
Let $\mathcal T^s_h$ be a mesh of $\Os$ and $\mathcal T^m_h$ a mesh of $\Om$, both geometrically conformal. Let 
\begin{align*}
	\V_h^s &:= \{ v_h \in \V^m; v_h|_K \in P_1(K)\, \, \forall K \in \mathcal T^s_h\},\\ \V_h^m &:= \{ w_h \in \V^m; w_h|_K \in P_1(K)\,\, \forall K \in \mathcal T^m_h\}, \\ \V_{h,per}^m &:= \{w_h \in \V_h^m; w_h \text{ is periodic along the $x_2$ direction}\}. 
\end{align*}
It is a known fact that each $\V_h^s$ and $\V_h^m$ has a set of nodal basis functions, denoted by $\{\psi_j^s\}$ and $\{\psi_j^m\}$, respectively, and satisfying that $\psi_j^s(\bm a_i) = \del_{ij}$ and $\psi_j^m(\hat{\bm a_i}) = \del_{ij}$ for each node $\bm a_i$ and $\hat{\bm a_i}$, and each $j$.

\begin{definition} \label{FEM2d}
 $(c_h,c_{1h}, c_{2h}): (0,T] \rightarrow \V^s_h \times \V^m_{h,per} \times \V^m_h$ is called an approximate weak solution to the 2-d system if 
\begin{alignat*}{2}
\< \p_t c_h, v_h \>_{\V^{s'}\x\V^s} + \calA[c_h, v_h;t] &= \mathcal{F}[c_{1h}, v_h;t] &&\quad \forall \,v_h \in \V^s_h, \\
 \< \phi \p_t c_{1h}, w_h \>_{\V^{m'}\x\V^m} + \calB[c_{1h}, w_h;t] &= \mathcal{F}[c_h, w_h;t] + \frac{\al}{\kap} (c_{2h}, w_h)_{\Om} && \quad \forall \,w_h \in \V^m_h, \\
(1-\phi) \p_t c_{2h} + \frac{\al}{\kap} c_{2h} &= \al c_{1h}. &&
\end{alignat*}
with $c_h(0,\cd)=\calPh c_0 \in \V^s_h$ and $c_{1h}(0,\cd) \equiv c_{2h}(0,\cd) \equiv 0$.
\end{definition}

We obtained the following well-posedness and error estimate results. 

	{\color{black} 
\begin{theorem}[Error estimates in 2-d]
For each $h>0$, there exists a unique approximate weak solution $(c_h,c_{1h}, c_{2h})$. 
Moreover, let $(c,c_1,c_2)$ be the weak solution stated in Theorem \ref{2-dwellposedness} and define error functions $e_h := c-c_h$, $e_{1h} := c_1-c_{1h}$, and $e_{2h} := c_2-c_{2h}$, then, the following error estimates hold:

	\begin{align} \label{2Derror_estimate1}
		\|e_h\|_{\Linf(0,T;L^2(\Os))} + \|e_{1h}\|_{\Linf(0,T;L^2(\Om))} + \|e_{2h}\|_{\Linf(0,T;L^2(\Om))} &\les h^2,\\
	 \|e_h\|_{L^2(0,T;H^1(\Os))} + \|e_{1h}\|_{L^2(0,T;H^1(\Om))} 
	 &\les h. \label{2Derror_estimate2}
	\end{align}
\end{theorem}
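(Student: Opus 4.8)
The plan is to mirror, step for step, the arguments of Lemma~\ref{ode_soln} and Theorem~\ref{FEMerr}, the only genuinely new ingredients being stronger trace inequalities for the $1$-d boundary pieces of the $2$-d domain and the time-dependence of the bilinear forms. First I would establish existence and uniqueness of the approximate weak solution: expanding $c_h$, $c_{1h}$, $c_{2h}$ in the nodal bases $\{\psi_j^s\}$ of $\V_h^s$ and $\{\psi_j^m\}$ of $\V_h^m$ (choosing, for $\V^m_{h,per}$, a basis that already builds in the periodic identification along $x_2$) turns Definition~\ref{FEM2d} into a linear first-order ODE system $\bm D\,\bm y'(t)=\bm M(t)\,\bm y(t)+\bm b(t)$, where $\bm D$ is block diagonal with blocks the \emph{time-independent} mass matrices (symmetric positive definite, hence invertible) and $\bm M(t),\bm b(t)$ depend continuously on $t$ through the coefficients. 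Writing $\bm y'=\bm D^{-1}(\bm M(t)\bm y+\bm b(t))$ and invoking Lemma~\ref{hartman} gives a unique solution on all of $[0,T]$.

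For the error estimates I would introduce the time-dependent elliptic projections $\calRh^A(t)\colon H^1(\Os)\to\V_h^s$ and $\calRh^B(t)\colon H^1_{per}(\Om)\to\V^m_{h,per}$ defined through $\calA[\cd,\cd;t]$ and $\calB[\cd,\cd;t]$, together with the $L^2$-projection $\calPh$ onto $\V_h^m$; these are well defined because, under the standing hypotheses of Theorem~\ref{2-dwellposedness} ($D,D_1$ uniformly elliptic and bounded, $\bm v$ bounded with $\mathrm{div}\,\bm v$ and $\bm v\cd\bm n$ controlled so that $\calB[\cd,\cd;t]$ is coercive on $H^1_{per}(\Om)$ uniformly in $t$), each form is continuous and coercive. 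Splitting $e_h=\rho_h+\xi_h$, $e_{1h}=\rho_{1h}+\xi_{1h}$, $e_{2h}=\rho_{2h}+\xi_{2h}$ exactly as before, standard finite-element approximation theory on shape-regular meshes gives
\[
\|\rho_h\|_{L^2}+h\|\nab\rho_h\|_{L^2}\les h^2\|c\|_{H^2(\Os)},\qquad
\|\rho_{1h}\|_{L^2}+h\|\nab\rho_{1h}\|_{L^2}\les h^2\|c_1\|_{H^2(\Om)},
\]
and $\|\rho_{2h}\|_{L^2}\les h^2\|c_2\|_{H^2(\Om)}$, while differentiating the defining identities in $t$ yields $\|\p_t\rho_h\|_{L^2}+\|\p_t\rho_{1h}\|_{L^2}\les h^2(\|\p_t c\|_{H^2}+\|c\|_{H^2}+\|\p_t c_1\|_{H^2}+\|c_1\|_{H^2})$, the extra lower-order contributions coming from $(\p_t\calA)[\cd,\cd;t]$ and $(\p_t\calB)[\cd,\cd;t]$, bounded using the assumed regularity of $\p_t D,\p_t D_1,\p_t\bm v$.

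Then I would subtract Definition~\ref{FEM2d} from the weak formulation of Theorem~\ref{2-dwellposedness}, use the projection orthogonalities to cancel $\calA[\rho_h,\cd;t]$, $\calB[\rho_{1h},\cd;t]$ and $(\rho_{2h},\cd)_\Om$, test the resulting error equations with $v_h=\xi_h$, $w_h=\xi_{1h}$ and $\xi_{2h}$ respectively, and add. On the left one obtains $\tfrac12\tfrac{d}{dt}(\|\xi_h\|_{L^2}^2+\|\xi_{1h}\|_{L^2}^2+\|\xi_{2h}\|_{L^2}^2)$ plus coercive terms controlling $\|\nab\xi_h\|_{L^2}^2+\|\nab\xi_{1h}\|_{L^2}^2$ and the favourable boundary square-terms on $\Gam$. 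On the right one must absorb (i) the interface couplings $\be\<\xi_{1h},\xi_h\>_\Gam$ and $\be\<\xi_h,\xi_{1h}\>_\Gam$, which partly cancel and are otherwise handled with the $2$-d trace inequality $\|u\|_{L^2(\Gam)}^2\les\eps\|\nab u\|_{L^2}^2+C_\eps\|u\|_{L^2}^2$; (ii) the non-symmetric advection term $(\bm v\cd\nab\xi_{1h},\xi_{1h})_\Om$, controlled by Young's inequality against the diffusion term (or integrated by parts into an $\|\xi_{1h}\|_{L^2}^2$ term plus a boundary term treated as in (i)); and (iii) the data terms built from the $\rho$'s, which are $O(h^4)$ in $L^1(0,T)$ by the projection estimates above. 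Choosing the Young parameters so that every boundary and advection contribution is absorbed into the coercive terms, applying the Gronwall inequality of Lemma~\ref{gronwall}, and taking the supremum over $[0,T]$ yields
\begin{align*}
&\|\xi_h\|_{\Linf(0,T;L^2)}^2+\|\xi_{1h}\|_{\Linf(0,T;L^2)}^2+\|\xi_{2h}\|_{\Linf(0,T;L^2)}^2\\
&\qquad+\|\nab\xi_h\|_{L^2(0,T;L^2)}^2+\|\nab\xi_{1h}\|_{L^2(0,T;L^2)}^2\les h^4,
\end{align*}
and the triangle inequality combined with the projection estimates gives \eqref{2Derror_estimate1}--\eqref{2Derror_estimate2}.

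The main obstacle, as anticipated in the text preceding the theorem, is the treatment of the boundary terms that now live on $1$-d segments ($\Gam$, and also $\Gam^a,\Gam^b,\Gam^c$): they must be bounded by genuine $2$-d trace inequalities rather than by pointwise evaluation, and one needs the uniform ellipticity of $D,D_1$ to dominate the resulting $\eps\|\nab\cd\|_{L^2}^2$ terms; the inhomogeneous Dirichlet data on $\Gam^a,\Gam^b$ are accommodated by a standard lifting/interpolation of the boundary data, contributing the usual $O(h^2)$ and $O(h)$ terms. A secondary technical point, absent from the $1$-d analysis of Theorem~\ref{FEMerr}, is that $\calA$ and $\calB$ are time-dependent, so $\calRh^A$ and $\calRh^B$ do not commute with $\p_t$; carrying the extra $(\p_t\calA)[\rho_h,\cd;t]$ and $(\p_t\calB)[\rho_{1h},\cd;t]$ terms and bounding them is precisely what forces the ``reasonable assumptions on the coefficients'' of Theorem~\ref{2-dwellposedness} to include bounds on $\p_t D,\p_t D_1,\p_t\bm v$. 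The periodicity constraint on $\V^m_{h,per}$ and the non-symmetry of the advection term add only bookkeeping.
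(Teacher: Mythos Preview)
The paper does not actually supply a proof of this theorem: Section~\ref{sec-6} explicitly says ``we only state the key formulations and main results, without giving detailed derivations and proofs,'' and merely remarks that the 1-d arguments extend once the interface terms on $\Gam$ are handled by genuine 2-d trace inequalities rather than pointwise evaluation. Your proposal does exactly this --- it mirrors Lemma~\ref{ode_soln} and Theorem~\ref{FEMerr}, replaces point values by $L^2(\Gam)$ integrals controlled via $\|u\|_{L^2(\Gam)}^2\les\eps\|\nab u\|_{L^2}^2+C_\eps\|u\|_{L^2}^2$, and absorbs the resulting gradient pieces into the coercive terms --- so it is correct and is precisely the approach the paper intends.

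One small comment: you may be doing more work than necessary on the time-dependence of the projections. In the model as written, $D=D(x)$ and $D_1=D_1(x)$, and nothing in \eqref{2dPDE}--\eqref{2dICii} forces $\bm v$, $\al$, $\be$, or $\kap$ to depend on $t$; the ``$;t$'' in $\calA[\cd,\cd;t]$ and $\calB[\cd,\cd;t]$ appears to be notational rather than substantive. If so, $\calRh^A$ and $\calRh^B$ commute with $\p_t$ and the $(\p_t\calA)$, $(\p_t\calB)$ correction terms you flag simply vanish, bringing the argument even closer to the 1-d proof. Your treatment of the Dirichlet data on $\Gam^a,\Gam^b$ via a lifting and of the periodic constraint via a periodic basis is the standard and correct way to handle those pieces.
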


We note that the interface terms now appear as $L^2$ integrals on $\Gamma$ in the 2-d case, which only gives us the control of $\|e_h\|_{L^2(0,T;L^2(\Gamma))}$ and $\|e_{1h}\|_{L^2(0,T; L^2(\Gamma))}$, not pointwise control as in the 1-d case. But, these estimates are consequences of \eqref{2Derror_estimate2} and the trace inequality. 
}

%We fully expect the fully discrete error to be first order in space with the Euler method. 

%%%%%%%%%%%%%%%%%%%%%%%%%%%%%%%%%%%%%% 
\section{Summary and concluding remarks}\label{sec-7}
{\color{black} 
In this paper we have presented a complete PDE and numerical analysis for the modified one-dimensional intravascular stent model originally proposed in \cite{McGinty13}. The modified model is not only well-posed, it also has improved numerical computability. The well-posedness was proved by using the Galerkin method in combination with a compactness argument. A semi-discrete finite-element method and a fully discrete scheme using the Euler time-stepping was formulated for the PDE model. Optimal order error estimates in the energy norm was proved for both schemes. Numerical results have been presented, along with comparisons between different decoupling strategies and time-stepping schemes. Finally, extensions of the model, along with its PDE and numerical analysis results for the two-dimensional case, were also briefly discussed.

Our future work on this research project will focus on the direct generalizations of this model in one and more dimensions in the modeling of the transmural advection using Darcy's flow; the model will also include an additional lumenal layer that considers the effect of blood flow on drug delivery. Moreover, we are very much interested in completing higher-dimensional codes and simulations as well as in validating the model simulations with real lab data. 

\textcolor{black}{It is also worth noting that the analysis techniques employed in this work should be readily extendable to other more sophisticated linear models. In addition, there have been recent developments in nonlinear drug-binding models (see \cite{McGinty22IJP} and \cite{McGinty22JCR}). In those cases,  the nonlinear terms require special care and new techniques. It is expected that our general techniques might still be applicable. Such a detailed analysis will be carried out as future work.}
}

%%%%%%%%%%%%%%%%%%%%%%%%%%%%%%%%%%%%%
%%%%%%%%%%%%%%%%%%%%%%%%%%%%%%%%%%%%%

\appendix
\section{Parameters and cited facts}\label{sec-8}

%%%
\subsection{Parameters}\label{sec-8-1}
We summarize in the chart below  the parameter values that appeared  in the one-dimensional model and used in our simulations. 
We refer the reader to \cite{McGinty13} for more details. 

\medskip
\begin{center}
\begin{tabular}{ |c|c|c|c| } 
 \hline
Parameter & Symbol & Value \\ %& Unit \\ 
 \hline\hline
Media porosity & $\phi$ & 0.61  \\ 
 \hline
Partition coefficient & $K$ & 15   \\ 
 \hline
                    & $\del$ & $4\x10^{-7}$   \\ 
 \hline
                    & $l$ & 0.028   \\
 \hline
                    & $\tP$ & $4.5\x10^{4}$   \\
  \hline
Peclet number   & $\Pe$ & 0.1044   \\
  \hline
Damkholer number   & $\Da$ & 0.0162  \\
  \hline
\end{tabular}
\end{center}
\medskip

%%%
\subsection{Some known facts}\label{sec-8-2}

In this subsection, we present a few well-known lemmas and theorems cited in this paper and either provide a brief proof or cite at least one reference where a proof can be found.

% ************************************
\begin{lemma}[A 1-d trace inequality]	\label{trace_1d}
Let $v: [a,b]\to \R$, $v\in H^1(a,b)$, and $H:=b-a$. Then
\[ 
|v(a)| \leq C \|v\|_{H^1(a,b)} \quad\mbox{and}\quad  |v(b)| \leq C \|v\|_{H^1(a,b)}. 
\]
where $C \approx 1$ if $H\gg1$ and $C \in \calO \big( H^{-1/2} \big)$ if $H\ll1$. 

\end{lemma}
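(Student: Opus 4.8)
The plan is to prove the 1-d trace inequality by the standard fundamental-theorem-of-calculus plus Cauchy--Schwarz argument, tracking the constant's dependence on $H = b-a$ carefully so that the stated asymptotics in $H$ emerge. First I would establish the pointwise identity: for any $x,y \in [a,b]$ with $x \le y$ we have $v(y) = v(x) + \int_x^y v'(s)\,ds$ (valid for $H^1$ functions in one dimension, which are absolutely continuous representatives). Integrating this in $x$ over $[a,b]$ and dividing by $H$ gives $v(y) = \frac{1}{H}\int_a^b v(x)\,dx + \frac{1}{H}\int_a^b\!\int_x^y v'(s)\,ds\,dx$, expressing the boundary value (take $y=a$ or $y=b$) in terms of the average of $v$ and an average of the derivative.

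The key estimate then comes from bounding the two terms. For the averaging term, Cauchy--Schwarz gives $\bigl|\frac{1}{H}\int_a^b v\,dx\bigr| \le H^{-1/2}\|v\|_{L^2(a,b)}$. For the double-integral remainder, since $\bigl|\int_x^y v'(s)\,ds\bigr| \le \int_a^b |v'(s)|\,ds \le H^{1/2}\|v'\|_{L^2(a,b)}$ by Cauchy--Schwarz, the outer $\frac{1}{H}\int_a^b dx$ contributes a factor $1$, yielding a bound $H^{1/2}\|v'\|_{L^2(a,b)}$. Combining, $|v(a)| \le H^{-1/2}\|v\|_{L^2} + H^{1/2}\|v'\|_{L^2}$, and the same bound holds for $|v(b)|$. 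Writing this as $C\|v\|_{H^1(a,b)}$ with $C = C(H)$, one reads off that $C$ behaves like $H^{-1/2}$ as $H \to 0$ (the $H^{-1/2}\|v\|_{L^2}$ term dominates) and like a constant (in fact one can take $C$ close to $1$) as $H \to \infty$, since then the $H^{1/2}$ coefficient on $\|v'\|_{L^2}$ is the larger one but $\|v'\|_{L^2}$ and $\|v\|_{L^2}$ both sit inside $\|v\|_{H^1}$; a slightly more careful split — e.g. using $\min\{H^{-1/2},\,?\}$ or simply $C = \max\{H^{-1/2}, H^{1/2}\}$ up to a factor $\sqrt 2$ from combining the two terms — gives the stated dichotomy.

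There is no serious obstacle here; the only thing requiring a little care is making the constant's $H$-dependence precise rather than just asserting ``$C$ depends on $H$''. In particular, I would be explicit that the inequality $|v(a)| \le H^{-1/2}\|v\|_{L^2(a,b)} + H^{1/2}\|v'\|_{L^2(a,b)} \le \sqrt{2}\,\max\{H^{-1/2}, H^{1/2}\}\,\|v\|_{H^1(a,b)}$ already delivers both asymptotic regimes, so the final constant can be named $C = C(H) := \sqrt 2\,\max\{H^{-1/2}, H^{1/2}\}$, which is $\calO(H^{-1/2})$ for $H \ll 1$ and $\calO(1)$ — indeed one can sharpen to $C \approx 1$ by optimizing the Young-type splitting — for $H \gg 1$. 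Alternatively, and perhaps cleanest for citation purposes, I would simply remark that this is the standard one-dimensional trace/Sobolev embedding $H^1(a,b) \hookrightarrow L^\infty(a,b)$ (see \cite{Evans}), and that rescaling $x \mapsto (x-a)/H$ to the reference interval $[0,1]$ exhibits the $H^{-1/2}$ scaling of the constant explicitly. Either route is short; I would write out the FTC computation since it is self-contained and makes the constant transparent.
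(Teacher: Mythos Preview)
Your averaging argument is correct and cleanly yields
\[
|v(b)| \;\le\; H^{-1/2}\|v\|_{L^2(a,b)} + H^{1/2}\|v'\|_{L^2(a,b)},
\]
which indeed gives $C = \calO(H^{-1/2})$ when $H\ll 1$. The gap is in the other regime: your explicit constant $\sqrt{2}\,\max\{H^{-1/2},H^{1/2}\}$ equals $\sqrt{2}\,H^{1/2}$ for $H\ge 1$ and hence blows up as $H\to\infty$; it is \emph{not} $\calO(1)$, and there is no free parameter in your inequality to ``optimize by a Young-type splitting''---the $H^{1/2}$ in front of $\|v'\|_{L^2}$ is fixed by the length of the averaging window. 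The rescaling-to-$[0,1]$ remark has the same defect: it produces exactly $H^{-1}\|v\|_{L^2}^2 + H\|v'\|_{L^2}^2$ under the square root, so again $C\sim H^{1/2}$ for large $H$. So as written you have proved only half of the stated dichotomy.

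The paper avoids this by applying the fundamental theorem of calculus to the \emph{weighted} function $w(x):=[(x-a)v(x)]^2$ (which vanishes at $x=a$), obtaining after expansion
\[
H^2 v^2(b) \;\le\; (2H+H^2)\|v\|_{L^2}^2 + H^2\|v'\|_{L^2}^2,
\qquad\text{i.e.}\qquad
v^2(b) \;\le\; (2/H+1)\|v\|_{L^2}^2 + \|v'\|_{L^2}^2.
\]
The weight $(x-a)$ is what tames the large-$H$ behaviour: the coefficient on $\|v'\|_{L^2}^2$ is exactly $1$, independent of $H$, so $C\to 1$ as $H\to\infty$, while $2/H$ supplies the $\calO(H^{-1/2})$ blow-up for small $H$. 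If you want to keep your averaging approach, the simple fix is to average not over all of $[a,b]$ but over a subinterval of length $\min\{H,1\}$ adjacent to the endpoint; then both coefficients stay bounded for $H\ge 1$.
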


\begin{proof}
Let $w: [a,b]\to \R$ be differentiable. By the fundamental theorem of calculus we have the following for any $a\leq w_1<w_2\leq b$:
\[ w(x_2) = w(x_1) + \int_{x_1}^{x_2} w'(s) \,ds. \]
Setting $w(x):=[(x-a)v(x)]^2$, $x_1:=a$, and $x_2:=b$ yields
\begin{align*}
[hv(b)]^2 &= [(a-a)v(a)]^2 + \int_a^b \Big((x-a)^2v^2(x) \Big)' \, dx \\
&= \int_a^b 2(x-a)v^2(x) + 2(x-a)^2v(x)v'(x) \, dx \\
%&\leq  \int_a^b 2hv^2(x) + h^22v(x)v'(x) \, dx \\
%&\leq  \int_a^b 2hv^2(x) + h^2v^2(x) + h^2(v'(x))^2 \, dx  \\
&\leq (2H+H^2) \|v\|^2_{L^2(a,b)} + H^2\|v'\|^2_{L^2(a,b)}.
\end{align*}
Therefore, 
\begin{align*}
  v^2(b) &\leq (2/H+1) \|v\|^2_{L^2(a,b)} + \|v'\|^2_{L^2(a,b)} ,
 %   |v(b)| &\leq (2h^{-1}+1)^{1/2} \|v\|_{H^1(a,b)}. 
\end{align*}
which gives the first inequality.  The second inequality follows similarly with $w(x):=[(b-x)v(x)]^2$. 
%
%Similarly with $w(x):=[(b-x)v(x)]^2$, $x_1:=a$ and $x_2:=b$,
%\begin{align*}
%[hv(a)]^2 &= [(b-b)v(b)]^2 - \int_a^b \Big((b-x)^2v^2(x) \Big)' \, dx 
%\leq (2h+h^2) \|v\|^2_{L^2(a,b)} + h^2\|v'\|^2_{L^2(a,b)}
%\end{align*}
%
%Hence $|v(a)| \leq (2h^{-1}+1)^{1/2} \|v\|_{H^1(a,b)}$. This concludes the proof.
\end{proof}

% ************************************
\begin{lemma}[General form of Gr\"onwall's inequality; see Appendix B2 of \cite{Evans}] \label{gronwall}
Let $\xi,\phi,\psi \in L^2(0,T)$ and be nonnegative, and let $\eta \in H^1(0,T)$. If 
\[ 
\eta'(t)  + \xi(t) \leq \phi(t) \eta(t) + \psi(t) \qquad \forall t\in (0,T],
 \]
then 
\[ 
\eta(t) + \int_0^t \xi(s) ds \leq  \exp\Bigl\{\int_0^t \phi(s) ds \Bigr\}  \( \int_0^t  \psi(s) ds + \eta(0) \)  \qquad \forall  t\in(0,T].
\] 
   
\end{lemma}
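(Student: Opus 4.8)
The plan is to run the classical integrating-factor argument, paying attention only to the low regularity in the hypotheses ($\eta\in H^1(0,T)$ with $\xi,\phi,\psi$ merely in $L^2(0,T)$). I would define the weight $\Phi(t):=\exp\bigl\{-\int_0^t\phi(s)\,ds\bigr\}$; this is well-defined, strictly positive, and absolutely continuous since $\phi\in L^2(0,T)\subset L^1(0,T)$, and it is nonincreasing because $\phi\ge 0$, so that $\Phi(s)\ge\Phi(t)$ whenever $0\le s\le t$, with $\Phi(0)=1$.

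First I would verify that $\Phi\eta$ is absolutely continuous on $[0,T]$: in one space dimension $H^1(0,T)$ functions have absolutely continuous, hence bounded, representatives, and $\Phi\in W^{1,1}(0,T)$ with $\Phi'=-\phi\Phi\in L^1(0,T)$, so the product rule for absolutely continuous functions gives $(\Phi\eta)'=\Phi\eta'-\phi\Phi\eta=\Phi(\eta'-\phi\eta)$ a.e., and this derivative lies in $L^1(0,T)$ (using $\eta'\in L^2$ and $\phi\eta\in L^2$). Multiplying the assumed differential inequality, rewritten as $\eta'-\phi\eta\le\psi-\xi$, by $\Phi>0$ then yields $(\Phi\eta)'(t)\le\Phi(t)\psi(t)-\Phi(t)\xi(t)$ for a.e.\ $t\in(0,T]$. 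Integrating over $(0,t)$ via the fundamental theorem of calculus for absolutely continuous functions, and then using $\xi\ge0$ together with $\Phi(s)\ge\Phi(t)$ on $[0,t]$ to get $\int_0^t\Phi(s)\xi(s)\,ds\ge\Phi(t)\int_0^t\xi(s)\,ds$, I would obtain, after dividing by $\Phi(t)>0$,
\[
\eta(t)+\int_0^t\xi(s)\,ds\le\frac{\eta(0)}{\Phi(t)}+\int_0^t\frac{\Phi(s)}{\Phi(t)}\,\psi(s)\,ds.
\]
Finally, since $\Phi(t)^{-1}=\exp\bigl\{\int_0^t\phi(s)\,ds\bigr\}$ and $\Phi(s)/\Phi(t)=\exp\bigl\{\int_s^t\phi(r)\,dr\bigr\}\le\exp\bigl\{\int_0^t\phi(r)\,dr\bigr\}$ — again because $\phi\ge0$ — both terms on the right are dominated by $\exp\bigl\{\int_0^t\phi(s)\,ds\bigr\}\bigl(\eta(0)+\int_0^t\psi(s)\,ds\bigr)$, which is exactly the claimed inequality.

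I do not expect any real obstacle; the only delicate point is the justification of the product rule and of the fundamental theorem of calculus at the stated regularity, which I would handle by invoking the absolute continuity of one-dimensional $H^1$ functions together with the fact that a product of absolutely continuous functions on a compact interval is again absolutely continuous. Alternatively, one may simply cite Appendix B.2 of \cite{Evans}, as indicated in the statement.
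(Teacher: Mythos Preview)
Your argument is correct: the integrating-factor approach with $\Phi(t)=\exp\bigl\{-\int_0^t\phi\bigr\}$, followed by the monotonicity bounds $\Phi(s)\ge\Phi(t)$ and $\Phi(s)/\Phi(t)\le\Phi(t)^{-1}$, yields exactly the stated inequality, and your regularity justifications (absolute continuity of $H^1(0,T)$ functions, product rule for AC functions on a compact interval) are the right ones.

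As for comparison with the paper: there is nothing to compare. The paper does not prove this lemma at all; it is listed in the appendix among ``known facts'' and is simply attributed to Appendix~B.2 of \cite{Evans}. Your write-up therefore supplies strictly more than the paper does, and your closing remark that one may alternatively just cite Evans is precisely what the authors opted for.
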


%\begin{proof}
%This is a mild generalization of the Gr\"onwall's Inequality in \cite{Evans} Appendix B2. First observe that 
%\begin{align*}
%\frac{d}{ds} \left[ \(\eta(s) + \int_0^s \xi(r) dr \) e^{-\int_0^s \phi(r) dr} \right]  
%	&=  e^{-\int_0^s \phi(r) dr} \left[ \eta'(s) + \xi(s) - \phi(s) \( \eta(s) + \int_0^s \xi(r) dr \) \right] \\
%	&\leq  e^{-\int_0^s \phi(r) dr} \left[ \psi(s) - \phi(s)  \int_0^s \xi(r) dr  \right] \\
%	&\leq  e^{-\int_0^s \phi(r) dr}  \psi(s) 
%\end{align*}
%where the last inequality follows from the non-negativity of $\phi,\xi$. Next integrate the above inequality on $[0,t]$ for any $t\in(0,T]$ where $\eta(t)$ is well-defined,
%\begin{align*}
%\eta(t)e^{-\int_0^t \phi(r) dr} + \(\int_0^t \xi(r) dr \)e^{-\int_0^t \phi(r) dr}   
%	&\leq  \eta(0) + \int_0^t e^{-\int_0^s \phi(r) dr}  \psi(s)  ds \\
%	&\leq  \eta(0) + \int_0^t  \psi(s)  ds
%\end{align*}
%where the last inequality again is due to the non-negativity of $\phi$. This concludes the proof.
%\end{proof}

% ************************************
\begin{theorem}[Theorem 5.9 of \cite{Evans}] \label{evans_more_calculus}
Suppose that ${u} \in L^2(0,T;H^1_0(U))$ and ${u}' \in L^2(0,T;H^{-1}(U))$.
\begin{itemize}
	\item[(i)] Then, ${u} \in C([0,T];L^2(U))$ (after possibly being redefined on a set of measure zero).
   \item[(ii)] The mapping $t \mapsto \|{u}(t)\|^2_{L^2(U)}$ is absolutely continuous, with 
   \[ 
   \frac{d}{dt} \|{u}(t)\|^2_{L^2(U)} = 2\< {u}'(t), {u}(t) \>  \qquad \mbox{for a.e. } 0\leq t\leq T.
   \]
   \item[(iii)] Furthermore, the following holds 
\[
 \max_{0\leq t\leq T}\|{u}(t)\|_{L^2(U)} \leq C \Big( \|{u}\|_{L^2(0,T;H_0^1(U))} + \|{u}'\|_{L^2(0,T;H^{-1}(U))}  \Big) . 
 \]
\end{itemize}
\end{theorem}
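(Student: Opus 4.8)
The plan is to follow the classical mollification-in-time argument; indeed, this is precisely Theorem~5.9 of \cite{Evans}, so one could simply invoke it, but I sketch the self-contained proof. The starting observation is that all three claims are elementary when $u$ is smooth in time: if $u\in C^1([0,T];H^1_0(U))$, then $t\mapsto\|u(t)\|^2_{L^2(U)}$ is $C^1$ with derivative $2(u'(t),u(t))_{L^2(U)}=2\langle u'(t),u(t)\rangle$, and integrating this identity over $[s,t]$ gives a representation from which (iii) follows directly. The entire difficulty is to transfer these facts to a general $u$ for which $u(t)$ is only defined for a.e.\ $t$ and $u'(t)$ lives in the dual space $H^{-1}(U)$.

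First I would extend $u$ in time to a slightly larger open interval $(-\mu_0,T+\mu_0)$ so that the extended function still lies in $L^2$ with values in $H^1_0(U)$ and its weak time derivative still lies in $L^2$ with values in $H^{-1}(U)$; a reflection-type extension across $t=0$ and $t=T$ accomplishes this while keeping both norms controlled by the originals. Then I would mollify in time, setting $u^\eps:=\eta_\eps * u$ for a standard mollifier $\eta_\eps$, so that $u^\eps$ is $C^\infty$ in time with values in $H^1_0(U)$ and $(u^\eps)'=\eta_\eps * u'$. Standard properties of mollification give $u^\eps\to u$ in $L^2(0,T;H^1_0(U))$ and $(u^\eps)'\to u'$ in $L^2(0,T;H^{-1}(U))$ as $\eps\to 0$.

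Applying the smooth identity to the difference $u^\eps-u^\mu$ and integrating in time yields, for every $s,t\in[0,T]$,
\[
\|u^\eps(t)-u^\mu(t)\|^2_{L^2}=\|u^\eps(s)-u^\mu(s)\|^2_{L^2}+2\int_s^t\langle (u^\eps)'-(u^\mu)',\,u^\eps-u^\mu\rangle\,d\tau.
\]
Averaging over $s\in(0,T)$ and using Cauchy--Schwarz on the duality pairing together with the two convergences above, I would show the right-hand side tends to $0$ uniformly in $t$, i.e.\ $\{u^\eps\}$ is Cauchy in $C([0,T];L^2(U))$. Its uniform limit is a continuous representative of $u$, giving (i). Passing to the limit $\eps\to 0$ in the integrated smooth identity (legitimate because the pairing $\langle (u^\eps)',u^\eps\rangle$ converges in $L^1(0,T)$) gives
\[
\|u(t)\|^2_{L^2}=\|u(s)\|^2_{L^2}+2\int_s^t\langle u'(\tau),u(\tau)\rangle\,d\tau,
\]
whose right-hand side is absolutely continuous in $t$; differentiating yields (ii).

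Finally, (iii) follows from this representation: choosing a base point $s_0$ with $\|u(s_0)\|^2_{L^2}\le\frac{1}{T}\int_0^T\|u(\tau)\|^2_{L^2}\,d\tau$ (which exists since the continuous map $t\mapsto\|u(t)\|^2_{L^2}$ takes a value at or below its own average), bounding the integral term by $\int_0^T\|u'\|_{H^{-1}}\|u\|_{H^1_0}\,d\tau$ and applying Young's inequality, one obtains $\max_{[0,T]}\|u(t)\|^2_{L^2}\les\|u\|^2_{L^2(0,T;H^1_0)}+\|u'\|^2_{L^2(0,T;H^{-1})}$. The main obstacle is the extension-plus-mollification step of the second paragraph: one must verify that the extension preserves both $L^2(H^1_0)$ and $L^2(H^{-1})$ membership and, crucially, that mollification commutes with the weak time derivative in the $H^{-1}$-valued sense, since the energy identity and every subsequent limit rest on the clean commutation $(\eta_\eps * u)'=\eta_\eps * u'$ and on the symmetry that lets $\langle (u^\eps)',u^\eps\rangle$ be recognized as a total time derivative.
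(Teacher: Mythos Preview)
Your sketch is correct and follows the standard mollification-in-time argument from \cite{Evans}. Note, however, that the paper does not actually prove this statement: it appears in the appendix under ``cited facts'' as a direct quotation of Theorem~5.9 from \cite{Evans}, with no proof given. So your proposal goes well beyond what the paper itself does, and there is nothing to compare against beyond the citation.
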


% ************************************
\begin{lemma}[Lemma 1.1, Chapter IV of \cite{Hartman}]\label{hartman}
The initial value problem given by
\[ 
\bm{y}\,'(t) = Q(t) \,\bm{y}(t), \quad \bm{y}(0) = \bm{y}_0 
\] 
has a unique solution $\bm{y}:[0,T]\to \bm{R}^n$  if $Q$ is a continuous function on $[0,T]$. 
\end{lemma}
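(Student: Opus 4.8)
The plan is to establish the two assertions of the theorem in turn: the existence and uniqueness of the approximate weak solution, which I reduce to a linear ODE system and settle with Lemma \ref{hartman}, and then the error bounds \eqref{2Derror_estimate1}--\eqref{2Derror_estimate2}, which follow from an energy argument parallel to the one-dimensional proof of Theorem \ref{FEMerr}.

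First I would establish existence and uniqueness of the discrete solution, as this is the core ODE result. Expanding the discrete unknowns in the nodal bases of Definition \ref{FEM2d}, $c_h=\sum_i y_{0,i}(t)\psi^s_i$, $c_{1h}=\sum_i y_{1,i}(t)\psi^m_i$ with $\{\psi^m_i\}$ restricted to the periodic space $\V^m_{h,per}$, and $c_{2h}=\sum_i y_{2,i}(t)\psi^m_i$, and handling the inhomogeneous Dirichlet data on $\Gam^a\cup\Gam^b$ by a fixed finite-element lifting, the three equations of Definition \ref{FEM2d} become a matrix-vector system $\bm D\,\bm y'(t)=\bm M(t)\,\bm y(t)+\bm f(t)$, structurally identical to \eqref{odes}. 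Here $\bm D$ is the block-diagonal mass matrix assembled from the Gram matrices $[(\psi^s_i,\psi^s_j)_{\Os}]$ and $[(\psi^m_i,\psi^m_j)_{\Om}]$ scaled by $1$, $\phi$, and $1-\phi$. The two facts that make Lemma \ref{hartman} applicable are that $\bm D$ is invertible and that $Q(t):=\bm D^{-1}\bm M(t)$ is continuous. The former is immediate because each Gram block is symmetric positive definite and $\phi,1-\phi\in(0,1)$, so $\bm D$ is SPD; the latter follows from the assumed continuity in $t$ of the coefficients $D,D_1,\bm v,\be,\al$ and of the lifted boundary data in $\bm f(t)$. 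I would emphasize that, unlike the autonomous one-dimensional system \eqref{odes}, here $\bm M$ and $\bm f$ genuinely depend on $t$, so it is the continuous-coefficient content of Lemma \ref{hartman} (applied to the augmented homogeneous system, or equivalently its standard inhomogeneous extension) that delivers a unique $\bm y\in C^1([0,T];\R^n)$, hence a unique approximate weak solution.

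Next I would prove the error estimates. I introduce the elliptic projections $\calRh^A,\calRh^B$ associated with $\calA[\cd,\cd;t]$ and $\calB[\cd,\cd;t]$ and the $L^2$-projection $\calPh$ onto $\V^m_h$; these are well-defined once coercivity and continuity of $\calA$ and $\calB$ on $H^1(\Os)$ and $H^1_{per}(\Om)$ are verified, where the advection contribution $(\bm v\cd\nab u,u)_{\Om}$ is absorbed using integration by parts together with the reaction term $\al(u,u)_{\Om}$ and the boundary term $\<(\be-\bm v\cd\bm n_1)u,u\>_{\Gam}$. With the decompositions $e_h=\rho_h+\xi_h$, $e_{1h}=\rho_{1h}+\xi_{1h}$, $e_{2h}=\rho_{2h}+\xi_{2h}$ and the standard projection-error bounds $\|\rho_h\|_{L^2}+h\|\nab\rho_h\|_{L^2}\les h^2\|c\|_{H^2}$ and analogues, I would form the error equations for the $\xi$-parts, test them with $\xi_h,\xi_{1h},\xi_{2h}$, use Young's inequality on the coupling and advection cross-terms, and close with Gronwall (Lemma \ref{gronwall}) to obtain $\|\xi_h\|_{\Linf(L^2)}^2+\|\xi_{1h}\|_{\Linf(L^2)}^2+\|\xi_{2h}\|_{\Linf(L^2)}^2+\|\nab\xi_h\|_{L^2(L^2)}^2+\|\nab\xi_{1h}\|_{L^2(L^2)}^2\les h^4$. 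The triangle inequality combined with the projection bounds then yields \eqref{2Derror_estimate1}--\eqref{2Derror_estimate2}.

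The hard part will be the two-dimensional interface and lateral-boundary terms. Because $\Gam$ is a one-dimensional curve rather than a point, the coupling terms $\<\be c_1,v\>_{\Gam}$ and $\<\be c,w\>_{\Gam}$ can no longer be bounded pointwise as in the one-dimensional proof; instead I must invoke a two-dimensional trace inequality of the form $\|u\|_{L^2(\Gam)}\les\|u\|_{L^2(\Om)}^{1/2}\|u\|_{H^1(\Om)}^{1/2}$, and choose the $\eps$-weights in the Young splittings so that these boundary contributions are absorbed into the $\|\nab\xi\|_{L^2}^2$ terms without destroying coercivity. The periodic conditions on $\Gam^c,\Gam^d$ are accommodated by working throughout in $\V^m_{h,per}$, which makes the boundary integrals there cancel in pairs, while the advection term forces the sign/coercivity hypothesis $\be-\bm v\cd\bm n_1\ge 0$ on $\Gam$. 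Once these ingredients are in place the energy estimate closes exactly as in Theorem \ref{FEMerr}, with the only genuine weakening being that the interface error is now controlled in $L^2(0,T;L^2(\Gam))$ rather than pointwise, precisely the point made in the remark following the theorem.
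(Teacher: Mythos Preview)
There is a fundamental mismatch: the statement you were asked to prove is Lemma~\ref{hartman}, the classical existence--uniqueness result for a linear ODE system $\bm y'(t)=Q(t)\bm y(t)$ with continuous coefficient matrix $Q$. The paper does not give a proof of this lemma at all; it is simply cited from Hartman's textbook as a known fact in the appendix. Your proposal, by contrast, is a proof sketch for an entirely different result---the two-dimensional finite-element existence and error-estimate theorem (the 2-d analogue of Lemma~\ref{ode_soln} combined with Theorem~\ref{FEMerr}). You \emph{invoke} Lemma~\ref{hartman} as a tool in your argument rather than proving it.

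If the intent was indeed to prove Lemma~\ref{hartman} itself, the correct approach is the standard one: either apply the Picard--Lindel\"of theorem (the right-hand side $Q(t)\bm y$ is globally Lipschitz in $\bm y$ on any compact time interval since $\|Q\|$ is bounded on $[0,T]$, so local existence extends globally), or construct the solution directly via the Peano--Baker series / fundamental matrix. If instead you meant to address the 2-d error-estimate theorem, your outline is reasonable and parallels the paper's one-dimensional arguments, but it is attached to the wrong statement.
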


% ************************************
\begin{theorem}[Theorem 4.10.8 of \cite{Friedman}]\label{weak_seq_compact}\label{friedman1}
Let $X$ be a reflexive Banach space. A set $K\subset X$ is weakly sequentially compact if and only if it is both bounded and weakly closed. 
\end{theorem}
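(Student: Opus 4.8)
The plan is to deduce the result from two classical structural theorems: Kakutani's characterization of reflexivity (a Banach space $X$ is reflexive if and only if its closed unit ball is weakly compact) and the Eberlein--\v Smulian theorem (in any Banach space a subset is weakly compact if and only if it is weakly sequentially compact). With both tools available, each implication becomes short, so the real work is in citing the right machinery and then assembling it.

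First I would prove sufficiency. Suppose $K$ is bounded and weakly closed. Boundedness yields an $R>0$ with $K\subseteq B_R:=\{x\in X:\|x\|\le R\}$, and reflexivity together with Kakutani's theorem makes $B_R$ weakly compact. Since a weakly closed subset of a weakly compact set is itself weakly compact, $K$ is weakly compact, and the Eberlein--\v Smulian theorem then upgrades weak compactness to weak sequential compactness, which is the desired conclusion.

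Next I would prove necessity. Suppose $K$ is weakly sequentially compact. For boundedness, recall that every weakly convergent sequence is norm-bounded by the uniform boundedness principle; hence if $K$ were unbounded it would contain a sequence $\{x_n\}$ with $\|x_n\|\to\infty$, no subsequence of which could converge weakly, contradicting weak sequential compactness. For weak closedness I would invoke Eberlein--\v Smulian once more: weak sequential compactness implies weak compactness, and a compact subset of the Hausdorff weak topology is automatically closed.

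I expect the genuine analytic content to reside entirely in the two cited theorems, with Eberlein--\v Smulian being the main obstacle, since it is precisely what bridges topological weak compactness and weak sequential compactness, two notions that are not equivalent in a general topological space. Without it the necessity argument would only deliver weak sequential closedness (via uniqueness of weak limits) rather than genuine weak closedness. Should the general theorem be unavailable for citation, I would instead exploit the reflexive setting directly through a metrizability argument: given a sequence in $K$, I would pass to the closed linear span of its terms, a separable reflexive subspace whose dual is therefore separable, so that the weak topology restricted to bounded subsets is metrizable; on such metrizable sets weak compactness and weak sequential compactness coincide by ordinary sequential reasoning. This reduction is essentially a self-contained proof of the reflexive case of Eberlein--\v Smulian and would be my fallback route.
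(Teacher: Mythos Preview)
Your argument is correct and is the standard route to this classical fact: Kakutani's theorem plus Eberlein--\v Smulian handle both directions cleanly, and your fallback via separable reflexive subspaces is exactly how one proves the reflexive case of Eberlein--\v Smulian when that theorem is not taken for granted.

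There is, however, nothing to compare against. In the paper this theorem appears in the appendix of cited background facts (Section~\ref{sec-8-2}) with only a reference to Friedman's textbook and no proof whatsoever; it is invoked in the existence argument (Theorem~\ref{Existence}) purely as a black box to extract weakly convergent subsequences from bounded sequences in $L^2(0,T;H^1)$. So your write-up goes well beyond what the paper itself supplies.
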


%\begin{remark}
%In the lemma above, the weakly closed condition is only used to show that the weak limit is in $K$, but not needed to ensure existence of a convergent subsequence. 
%\end{remark}

% ************************************
\begin{theorem}[Theorem 4.12.3 of \cite{Friedman}]\label{weakly_subseq}\label{friedman2}
Let $X$ be a separable normed linear space. Then every bounded sequence of continuous linear functionals in $X^*$ has a weakly convergent subsequence. 
\end{theorem}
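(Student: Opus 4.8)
The plan is to prove this sequential weak-* compactness result by a Cantor diagonal argument combined with an equicontinuity/$3\eps$ estimate, exploiting separability to reduce the problem to a countable family of scalar subsequences. Let $\{f_n\} \subset X^*$ be bounded, say $\|f_n\| \le M$ for all $n$, and let $\{x_j\}_{j=1}^\infty$ be a countable dense subset of $X$, which exists by separability. The goal is to produce a subsequence $\{g_k\}$ and an $f \in X^*$ with $g_k(x) \to f(x)$ for every $x \in X$ (i.e. weak-* convergence, which is the statement's ``weak'' convergence in $X^*$).

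First I would extract, by diagonalization, a single subsequence $\{g_k\} \subseteq \{f_n\}$ along which $\{g_k(x_j)\}_k$ converges for every fixed $j$. For each $j$ the scalar sequence $\{f_n(x_j)\}_n$ is bounded, since $|f_n(x_j)| \le M\|x_j\|$, so Bolzano--Weierstrass yields a convergent subsequence. Applying this successively---extracting from the $(j{-}1)$-th subsequence a further subsequence on which evaluation at $x_j$ converges---and then passing to the diagonal sequence produces $\{g_k\}$ that converges when evaluated at every $x_j$ simultaneously.

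Next I would upgrade convergence from the dense set to all of $X$ using the uniform bound. For arbitrary $x \in X$ and $\eps > 0$, choose $x_j$ with $\|x - x_j\| < \eps$; then for all $k,l$,
$$
|g_k(x) - g_l(x)| \le |g_k(x - x_j)| + |g_k(x_j) - g_l(x_j)| + |g_l(x_j - x)| \le 2M\eps + |g_k(x_j) - g_l(x_j)|.
$$
Since $\{g_k(x_j)\}_k$ converges it is Cauchy, so the right-hand side is arbitrarily small for large $k,l$; hence $\{g_k(x)\}_k$ is Cauchy in $\R$ and therefore converges. I then define $f(x) := \lim_k g_k(x)$.

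Finally I would verify $f \in X^*$ and conclude. Linearity follows by passing to the limit in $g_k(\al x + \be y) = \al\, g_k(x) + \be\, g_k(y)$, and boundedness from $|f(x)| = \lim_k |g_k(x)| \le M\|x\|$, which gives $\|f\| \le M$. Thus $g_k(x) \to f(x)$ for every $x \in X$, so $\{g_k\}$ is the sought weakly convergent subsequence with limit $f \in X^*$. The main technical point is the diagonal extraction followed by the $3\eps$ passage; the essential role of the bound $M$ is that it furnishes the equicontinuity that makes the $2M\eps$ term uniform in $k,l$, which is exactly what allows convergence to propagate from the countable dense set to the whole space.
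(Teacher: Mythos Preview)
Your proof is correct: it is the standard diagonalization plus $3\eps$ argument for sequential weak-* compactness in the dual of a separable space (sometimes called Helly's selection theorem). Note, however, that the paper does not supply its own proof of this statement; it merely cites it from Friedman's \emph{Foundations of Modern Analysis} as a known fact used in the compactness step of the existence argument. So there is no ``paper's proof'' to compare against---your argument simply fills in what the paper takes for granted, and it does so in exactly the classical way one would find in Friedman's text.
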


% ************************************
\begin{theorem}[Theorem 5.1.1 of \cite{Friedman}]\label{compact_op}\label{friedman3}
A completely continuous (compact) linear operator maps weakly convergent sequences into convergent sequences. 
\end{theorem}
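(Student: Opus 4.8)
The plan is to combine the defining property of compactness---that bounded sets are sent to relatively compact sets---with the automatic weak-to-weak continuity of a bounded linear operator, and then to upgrade subsequential strong convergence to convergence of the full sequence via a uniqueness-of-limits argument. Let $T$ denote the compact operator and suppose $x_n \weakto x$ in the domain space; the goal is to show $Tx_n \to x$ in norm.

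First I would record two preliminary facts. Since $\{x_n\}$ converges weakly, it is bounded: this is a standard consequence of the uniform boundedness principle applied to the sequence regarded as functionals on the dual space. Second, a compact operator is in particular bounded, hence continuous, and a bounded linear operator is weak-to-weak continuous: for every functional $f$ in the dual of the target space, $f(Tx_n) = (T^* f)(x_n) \to (T^* f)(x) = f(Tx)$, so $Tx_n \weakto Tx$. Thus the weak limit of $\{Tx_n\}$ is already pinned down as $Tx$, and the only remaining issue is to promote this weak convergence to strong convergence.

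The core step is an argument by contradiction. Suppose $Tx_n$ does not converge strongly to $Tx$. Then there exist $\eps > 0$ and a subsequence $\{x_{n_k}\}$ with $\|Tx_{n_k} - Tx\| \geq \eps$ for all $k$. Since $\{x_{n_k}\}$ is bounded and $T$ is compact, the image sequence $\{Tx_{n_k}\}$ is relatively compact, so it admits a strongly convergent further subsequence $Tx_{n_{k_j}} \to y$. Strong convergence implies weak convergence, so $Tx_{n_{k_j}} \weakto y$; but as a subsequence of $\{Tx_n\}$ it also satisfies $Tx_{n_{k_j}} \weakto Tx$. By the uniqueness of weak limits, $y = Tx$, whence $\|Tx_{n_{k_j}} - Tx\| \to 0$, contradicting $\|Tx_{n_{k_j}} - Tx\| \geq \eps$. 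Therefore $Tx_n \to Tx$ strongly, which is the assertion.

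I expect the main subtlety to be precisely this final upgrade from subsequential to full convergence: compactness delivers only a convergent subsequence, and without having first identified the weak limit as $Tx$ one could not determine the limit of that subsequence. The two preliminary observations---boundedness of weakly convergent sequences and weak-to-weak continuity---are exactly what make the identification succeed, so the proof hinges on assembling them \emph{before} invoking compactness.
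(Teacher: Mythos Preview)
Your argument is correct and is the standard textbook proof of this fact. Note, however, that the paper does not supply its own proof of this theorem: it is listed in the appendix among ``known facts'' and simply cited to Friedman's book, so there is no in-paper proof to compare against. One small typo: in your opening paragraph you write ``the goal is to show $Tx_n \to x$ in norm,'' but you mean $Tx_n \to Tx$, as you correctly write thereafter.
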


\end{document}